\newtheorem{step}{Step}[section]
\newtheorem{theorem}{Theorem}[section]
\newtheorem{corollary}{Corollary}[section]
\newtheorem{lemma}{Lemma}[section]
\newtheorem{definition}{Definition}[section]
\newtheorem{remark}{Remark}[section]
\DeclareMathOperator*{\argmax}{argmax}
\DeclareMathOperator*{\argmin}{argmin}
\newcommand{\tr}{\operatorname{tr}}
\newcommand{\diag}{\operatorname{diag}}
\newcommand{\Id}{\operatorname{Id}}
\title[]{Stretching convex domains to capture \\ many lattice points}
\keywords{Lattice point counting, shape optimization, decay of Fourier
transform}
\subjclass[2010]{42B10, 52C07 (primary) and 11H06, 35P15, 90C27 (secondary)} 
\author[]{Nicholas F. Marshall}
\address{Department of Mathematics, Yale University, New Haven, CT 06511, USA}
\email{nicholas.marshall@yale.edu}
\begin{document}

\begin{abstract}
We consider an optimal stretching problem for strictly convex domains in
$\mathbb{R}^d$ that are symmetric with respect to each coordinate hyperplane,
where stretching refers to transformation by a diagonal matrix of determinant
$1$. Specifically, we prove that the stretched convex domain which captures the
most positive lattice points in the large volume limit is balanced: the
$(d-1)$-dimensional measures of the intersections of the domain with each coordinate hyperplane are equal. Our results extend those of Antunes \& Freitas, van den Berg, Bucur \& Gittins, Ariturk \& Laugesen, van den Berg \& Gittins, and Gittins \& Larson. The approach is motivated by the Fourier analysis techniques used to prove the classical $\#\{(i,j) \in \mathbb{Z}^2 : i^2 +j^2 \le r^2 \} =\pi r^2 + \mathcal{O}(r^{2/3})$ result for the Gauss circle problem.
\end{abstract}

\maketitle

\section{Introduction}

\subsection{Introduction}
In \cite{AntunesFreitas2012} Antunes \& Freitas introduced a new
type of lattice point problem; namely, among all ellipses that are symmetric
about the coordinate axes and of fixed area, which captures the most positive
lattice points? More precisely, for $r \ge 1$ what is:
$$
a(r) = \argmax_{a > 0}\, \# \left\{ (i,j) \in \mathbb{Z}^2_{>0} : \left(\frac{i}{a} \right)^2 +
\left( j a  \right)^2 \le r^2 \right \}.
$$
Determining $a(r)$ for fixed $r \ge 1$ is challenging, even computationally
\cite{AntunesFreitas2012}; however, Antunes \& Freitas were able
to prove that 
$$
\lim_{r \rightarrow \infty} a(r) = 1,
$$
i.e., the ellipse which captures the most positive lattice points for large
areas approaches a circle. Moreover, a rate of convergence of at least
$$
|a(r) - 1| = \mathcal{O}(r^{-1/6}) \quad \text{as } r \rightarrow \infty,
$$
was established in \cite{LaugesenLiu2016} (and implied
by Section 5 in \cite{AntunesFreitas2012}).
\begin{figure}[h!]
\centering
\includegraphics[width=.45\textwidth]{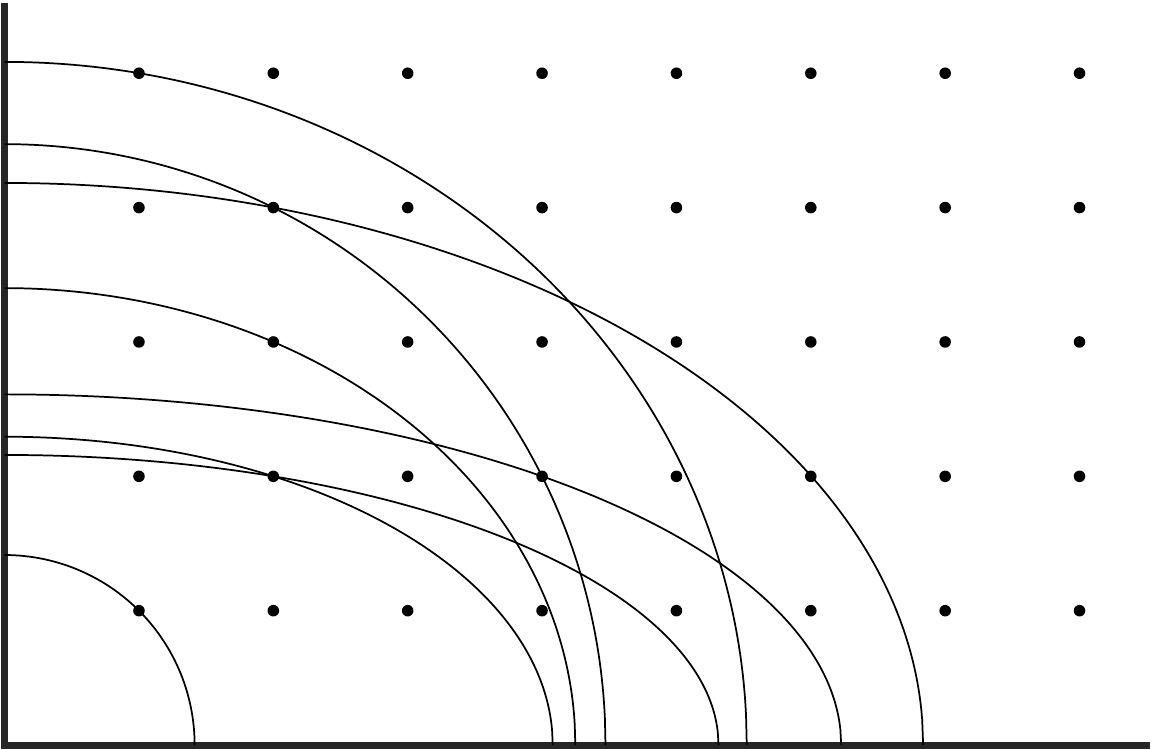}
\caption{Eight ellipses which each capture the maximum possible number of
positive lattice points for their given area.}
\end{figure}
This lattice point counting problem was originally motivated by a problem in
high frequency shape optimization. Suppose that $\Omega \subset \mathbb{R}^2$ is
an $a \times 1/a$ rectangular domain. Then the eigenvalues of the
Dirichlet Laplacian $-\Delta_\Omega^{\mathcal{D}}$ are of the form
$$
\sigma \left(-\Delta_\Omega^{\mathcal{D}} \right) = \left\{ \pi^2 \left( \left(
\frac{i}{a} \right)^2 + \left( a j \right)^2 \right) : i,j \in \mathbb{Z}_{>0} \right\}.
$$
Thus, there is a bijection between the Dirichlet eigenvalues less than $\pi^2
r^2$, and the positive lattice points in the ellipse $ (x/a)^2 + (a y)^2 \le
r^2$. Hence, the statement $\lim_{r \rightarrow \infty} a(r) = 1$ can also be
interpreted as the statement that the rectangle that minimizes the Dirichlet
eigenvalues in the high frequency limit approaches the square.

\subsection{Neumann eigenvalues}
A dual result for eigenvalues of the Neumann Laplacian
$-\Delta_\Omega^\mathcal{N}$ was established by van den Berg, Bucur \& Gittins
\cite{vandenBergBucurGittins2016}, where as before $\Omega \subset \mathbb{R}^2$
is an $a \times 1/a$ rectangular domain. The eigenvalues of
$-\Delta_\Omega^\mathcal{N}$ are of the form
$$
 \sigma \left(-\Delta_\Omega^\mathcal{N} \right) = \left \{ \pi^2 \left( \left(
\frac{i}{a} \right)^2 + \left( a j \right)^2 \right) : i,j \in \mathbb{Z}_{\ge
0} \right\},
$$
and hence, the Neumann eigenvalues less than $\pi^2 r^2$ are in bijection with
the nonnegative lattice points in the ellipse $ (x/a)^2 + (a y)^2 \le r^2$. In
terms of a lattice point problem, van den Berg, Bucur \& Gittins proved that if
$$
a(r) = \argmin_{a > 0}\, \# \left\{ (i,j) \in \mathbb{Z}^2_{\ge 0} :
\left(\frac{i}{a} \right)^2 + \left( j a  \right)^2 \le r^2 \right \},
$$
then $\lim_{r \rightarrow \infty} a(r) = 1$. That is, the ellipse which captures
the least nonnegative lattice points in the large area limit approaches the circle;
equivalently, the rectangle which maximizes the Neumann eigenvalues in the high
frequency limit approaches the square.  
\subsection{Higher dimensions}
Subsequently, the result for Dirichlet eigenvalues was generalized to
three dimensions by Gittins \& van den Berg \cite{vandenBergGittins2016}, and
recently to $d$-dimensions for both the Dirichlet and Neumann cases by Gittins
\& Larson \cite{GittinsLarson2017}.  Specifically, Gittins \& Larson show
that in $\mathbb{R}^d$ the cuboid of unit measure which minimizes the Dirichlet
eigenvalues in the high frequency limit approaches the cube, and similarly, that
the cuboid of unit measure which maximizes the Neumann Laplacian eigenvalues in
the high frequency limit approaches the cube. Both the Dirichlet and Neumann
cases have corresponding lattice point problems analogous to the $2$-dimensional
case. The Dirichlet case corresponds to the following lattice point problem.
Suppose $A = \diag(a_1,a_2,\ldots,a_d)$ is a positive diagonal matrix of
determinant $1$. For
$r \ge 1$, let 
$$ 
A(r) = \argmax_{A} \, \# \left\{ (i_1,i_2,\ldots,i_d) \in \mathbb{Z}^d_{>0} : \left( \frac{i_1}{a_1}
\right)^2 + \left( \frac{i_2}{a_2} \right)^2 + \cdots + \left( \frac{i_d}{a_d}
\right)^2 \le r^2 \right\}.
$$
Then the result of \cite{GittinsLarson2017} implies that $\lim_{r \rightarrow
\infty} \|A - \Id\|_\infty = 0$. Moreover, Gittins \& Larson
\cite{GittinsLarson2017} show the following error estimate holds for dimensions
$d \ge 2$: 
$$
 \left\| A - \Id \right\|_\infty = \mathcal{O} \left( r^{-\frac{d-1}{2(d+1)}}
\right) \quad \text{as } r \rightarrow \infty,
$$
and furthermore, they show a slightly improved error rate holds for
$d > 5$ by applying a result of G\"otze \cite{Gotze2004}.  In this paper, we
establish a similar rate of convergence for general convex domains. We note that
the Neumann case in $d$-dimensions has an analogous dual formulation to the
$2$-dimensional case, and similar error estimates were established in
\cite{GittinsLarson2017}.  We remark that results concerning the optimization of
eigenvalues of the Laplacian with perimeter constraints have also been
considered \cite{Antunes2016, BucurFreitas2013, vandenBerg2015}.  In particular,
Bucur \& Freitas \cite{BucurFreitas2013} proved that any sequence of minimizers
of the Dirichlet eigenvalues with a perimeter constraint converges to the unit
disk in the high frequency limit, and that among $k$-sided polygons, any
sequence of minimizers converges to the regular $k$-sided polygon in the high
frequency limit.

\subsection{Convex and concave curves}
Laugesen \& Liu \cite{LaugesenLiu2016} and  Ariturk \& Laugesen
\cite{AriturkLaugesen2017} generalized the two dimensional case of the above
lattice point counting problems to certain classes of convex and concave curves.
In particular, their results imply that among $p$-ellipses $ \left|x/a\right|^p
+ \left|a y \right|^p = r^p$ for $p \in (0,\infty) \setminus \{1\}$  the
$p$-ball captures the most positive lattice points. More generally, Laugesen \&
Liu and Ariturk \& Laugesen show that the curves which capture the most positive
integer lattice points in the large area limit are balanced: the distance from
the origin to their points of intersection with the $x$-axis and $y$-axis are
equal. 

\begin{figure}[h!]
\centering
\includegraphics[width=.23\textwidth]{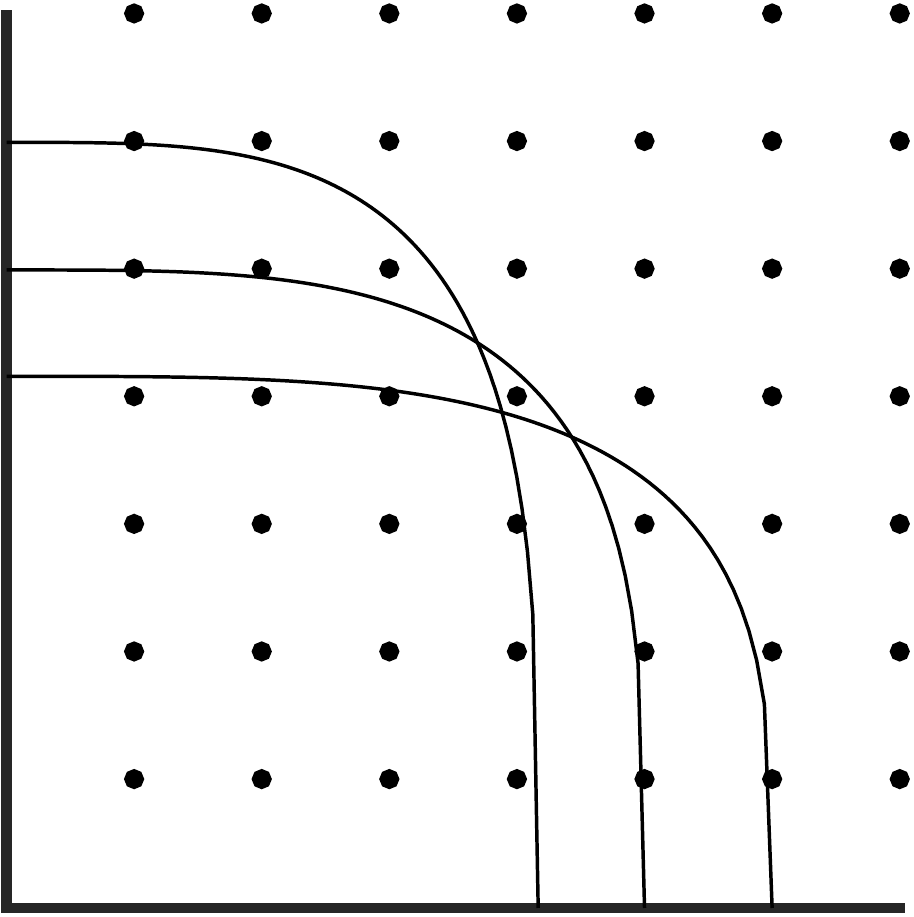}
\caption{Among all stretches of a concave curve, which captures the most
positive lattice points?}
\end{figure}

\subsection{Right triangles}
The results of \cite{AriturkLaugesen2017,LaugesenLiu2016} exclude the
$p=1$ case.  In contrast to other values of $p$, for the $p=1$ case (where
$p$-ellipses are right triangles) Ariturk \& Laugesen and Laugesen \& Liu
conjectured that the optimal triangle
\begin{quote}
``does not approach a 45-45-90 degree triangle as $r\rightarrow \infty$. Instead
one seems to get an infinite limit set of optimal triangles'' (from
\cite{LaugesenLiu2016}).
\end{quote}
The author and Steinerberger \cite{MarshallSteinerberger2017} recently proved
this conjecture and showed that the limit set is fractal of Minkowski dimension
at most $3/4$. Furthermore, all triangles which are optimal for infinitely many
infinitely large areas have rational slopes, are contained in $[1/3,3]$, and
have $1$ as a unique accumulation point. 

\begin{figure}[h!]
\centering
\begin{tabular}{cc}
\includegraphics[width=.2\textwidth]{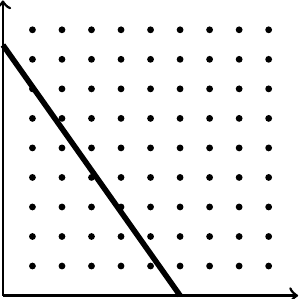}
&
\includegraphics[width=.55\textwidth]{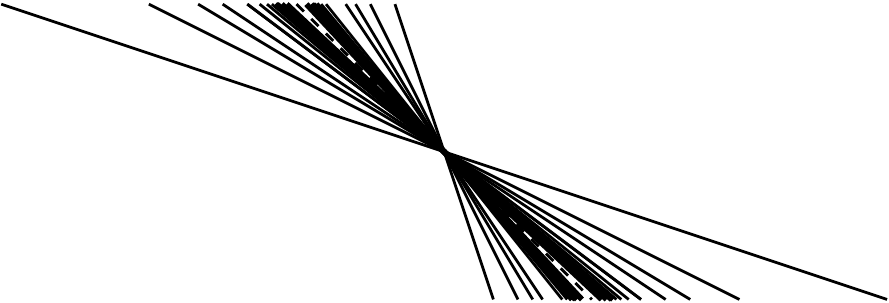}
\end{tabular}
\caption{An illustration of the fractal set of infinitely many times optimal
slopes.}
\end{figure}

\subsection{Relation to spectral asymptotics}
Suppose that $R \subset \mathbb{R}^d$ is a cuboid, and let $N_R(\lambda)$ denote
the number of eigenvalues of the Dirichlet Laplacian $-\Delta_R^\mathcal{D}$
that are less than $\lambda$ 
$$
N_R(\lambda) = \# \left\{ \mu \in \sigma \left( -\Delta_R^\mathcal{D} \right) :
\mu \le \lambda \right\}.
$$
Then $N_R(\lambda)$ has a two-term asymptotic formula in terms of the volume
$|R|$ and surface area $|\partial R|$ of $R$
$$
N_R(\lambda) = \frac{v_d}{(2\pi)^d} |R| \lambda^{d/2} - \frac{v_{d-1}}{4(2
\pi)^{d-1}} |\partial R|  \lambda^\frac{d-1}{2} + o_R\left(\lambda^\frac{d-1}{2}
\right), \quad \text{as} \quad \lambda \rightarrow \infty,
$$
where $v_d$ is the volume of the unit ball in $\mathbb{R}^d$. A similar
asymptotic formula holds for the eigenvalues of the Neumann Laplacian
$-\Delta_R^\mathcal{N}$, see \cite{Ivrii2016}.  We remark that in 1913, Weyl,
see page 199 of \cite{Weyl1913}, speculated about the existence of such a
two-term asymptotic formula for more general domains in $\mathbb{R}^3$, and the
problem of establishing the above two-term asymptotic formula for general
domains became known as Weyl's Conjecture, see \cite{Ivrii2016}.  In 1980, Ivrii
\cite{Ivrii1980} proved that this two-term asymptotic formula indeed holds for
more general domains under certain  conditions.  Suppose that $R$ is an $a_1
\times a_2 \times \cdots \times a_d$ cuboid. Then the eigenvalues of the
Dirichlet Laplacian $-\Delta^\mathcal{D}_R$ are of the form
$$
\sigma \left( - \Delta_R^\mathcal{D} \right) = \left\{
\pi^2 \left( \left(\frac{i_1}{a_1} \right)^2 + \left( \frac{i_2}{a_2} \right)^2
+ \cdots + \left( \frac{i_d}{a_d} \right)^2 \right): (i_1,i_2,\ldots,i_d) \in
\mathbb{Z}_{>0}^d \right\},
$$
and thus, the eigenvalues of $-\Delta_R^\mathcal{D}$ less than $\pi^2
r^2$ are in bijection with the positive lattice points in the ellipsoid
$(x_1/a_1)^2 + (x_2/a_2)^2 + \cdots + (x_d/a_d)^2 \le r^2$. If this cuboid $R$
has unit measure $|R|=1$, then its surface area $|\partial R|$ is given by 
$$
|\partial R| = \sum_{j=1}^d 2 a_j^{-1} = 2 \left( \tr A^{-1} \right), \quad
\text{where} \quad A = \diag(a_1,a_2,\ldots,a_d).
$$
Substituting $\lambda = \pi^2 r^2$ into the above two-term asymptotic
formula for $N_R(\lambda)$ for this cuboid gives
$$
N_R(\pi^2 r^2) = \frac{1}{2^d} v_d \, r^d - \frac{1}{2^d} v_{d-1} \left( \tr A^{-1}
\right)r^{d-1} + o_R\left(r^{d-1} \right), \quad \text{as} \quad r \rightarrow
\infty.
$$
We emphasize that the error term in this asymptotic formula depends implicitly
on the cuboid $R$, and therefore, this asymptotic formula by itself is
insufficient to determine which cuboid $R$ of unit measure maximizes
$N_R(\lambda)$ as $\lambda \rightarrow \infty$.  Addressing this issue is the
main challenge in \cite{AntunesFreitas2012, GittinsLarson2017,
vandenBergBucurGittins2016, vandenBergGittins2016}. If we were to ignore the
error term, then maximizing $N_R(\lambda)$ among cuboids of unit measure would
be equivalent to minimizing $\tr A^{-1}$. By the arithmetic mean geometric mean
inequality 
$$
\tr A^{-1} = d \cdot \frac{a_1^{-1} + a_2^{-1} + \cdots + a_d^{-1}}{d} \ge d
\cdot \left( a_1^{-1} \cdot a_2^{-1} \cdot \cdots \cdot a_d^{-1} \right)^{1/d} =
d,
$$
with equality if and only if $A = \Id$.  Since $2\tr A^{-1} = |\partial R|$,
this inequality can be interpreted as an isoperimetric inequality for
cuboids:  the surface area of a cuboid of unit measure is greater than or equal
to the surface area of the unit cube with equality if and only if the cuboid is
the unit cube. Ultimately, after the main challenge of dealing with the error
term has been appropriately handled, this isoperimetric inequality for cuboids
is the reason that the cube is asymptotically optimal in
\cite{AntunesFreitas2012, GittinsLarson2017, vandenBergBucurGittins2016,
vandenBergGittins2016}.  In this paper, we consider a positive lattice point
counting problem in more general domains; specifically, we study the number of
positive lattice points in a fixed convex domain $\Omega$ that has been scaled
by $r \ge 1$ and stretched by a linear transformation $A$ represented by a
positive diagonal matrix of determinant $1$.  In this generalized setting, the
term $\tr A^{-1}$ similarly arises, and we use Fourier analysis to develop
lattice point counting results which lead to uniform error estimates for optimal
stretching problems.

\subsection{Motivation}
In this paper, our motivation is twofold: first, the results of Laugesen \& Liu
and Ariturk \& Laugesen show that the asymptotic balancing observed in lattice
point problems for ellipses \cite{AntunesFreitas2012, GittinsLarson2017,
vandenBergBucurGittins2016, vandenBergGittins2016} extends from
ellipses to a more general class of convex and concave curves, at least in two
dimensions, and second, the analysis of the $p=1$ case in
\cite{MarshallSteinerberger2017} shows that the convergence breaks down when the
curves become flat. This phenomenon is common in harmonic analysis, where the
decay of the Fourier transform is dependent on non-vanishing curvature.  To
briefly review the relation of Fourier analysis to lattice point problems,
suppose $f$ is a $C^\infty$ function on $\mathbb{R}^d$ of compact support. Then
the Poisson summation formula states that
$$
\sum_{n \in \mathbb{Z}^d} f(n) = \sum_{n \in \mathbb{Z}^d} \widehat{f}(n),
$$
where $\widehat{f}$ denotes the Fourier transform of $f$
$$
\widehat{f}(\xi) = \int_{\mathbb{R}^d} f(x) e^{-2\pi i x \cdot \xi} dx.
$$
Suppose that $\chi(x)$ is the indicator function for a domain $\Omega \subset
\mathbb{R}^d$. Then the indicator function for the scaled domain $r \Omega$ is
$\chi_r(x) = \chi(x/r)$, whose Fourier transform, by a change of variables is
$$
\widehat{\chi}_r(\xi) = r^d \widehat{\chi}(r \xi).
$$
Moreover, since $\widehat{\chi}_r(0) = |\Omega| r^d$, if the Poisson summation
formula could be applied to $\chi_r$, it would express the number of lattice
points inside $r \Omega$ as $|\Omega| r^d$ plus the sum of $\widehat{\chi}_r$
over the nonzero lattice points.  Unfortunately, the Fourier transforms of
indicator functions do not decay rapidly enough for the Poisson summation
formula to be applied (these functions lack sufficient smoothness).  However,
this issue can be resolved by smoothing the indicator function by convolution
with a bump function, and useful estimates can be obtained. This approach can be
used to establish the classical 
$$
\#\{ (i,j) \in \mathbb{Z}^2 : i^2 + j^2 \le r^2 \} =  \pi r^2 +
\mathcal{O}(r^{2/3}),
$$
result for the Gauss circle problem accredited to Sierpi\'nski
\cite{Sierpinski1906}, van der Corput \cite{vanderCorput1923}, and Voronoi
\cite{Voronoi1903}, which was the first non-trivial step towards the
conjectured result:
$$
\# \{ (i,j) \in \mathbb{Z}^2 : i^2 + j^2 \le r^2 \} =  \pi r^2 +
\mathcal{O} \left(r^{1/2 + \varepsilon} \right),
$$
for all $\varepsilon > 0$. Currently the best known result is
$\mathcal{O}(r^{131/208})$ due to Huxley \cite{Huxley2003}.  The argument for
the $\mathcal{O}(r^{2/3})$ error term for the circle in $\mathbb{R}^2$ can be
generalized for convex domains $\Omega \subset \mathbb{R}^d$ whose boundary
$\partial \Omega$ has nowhere vanishing Gauss curvature \cite{Stein1993}. This
generalization results in the following bound on the number of enclosed lattice
points: 
$$
\# \{ n \in \mathbb{Z}^d \cap r\Omega\} = |\Omega| r^d + \mathcal{O} \left( r^{d - \frac{2d}{d+1}} \right).
$$
In this paper, we follow a similar approach to the proof of this result, but
additionally handle the effects of stretching the domain. More specifically,
given a strictly convex domain $\Omega$ which is symmetric with respect to each
coordinate hyperplane, we consider the positive lattice points contained in the
domain $A(r \Omega) = \{ A (rx) : x \in \Omega \}$, where $A$ is a positive
diagonal matrix of determinant $1$, and $r \ge 1$ is a scaling factor.
\begin{figure}[h!]
\centering
\includegraphics[width=.23\textwidth]{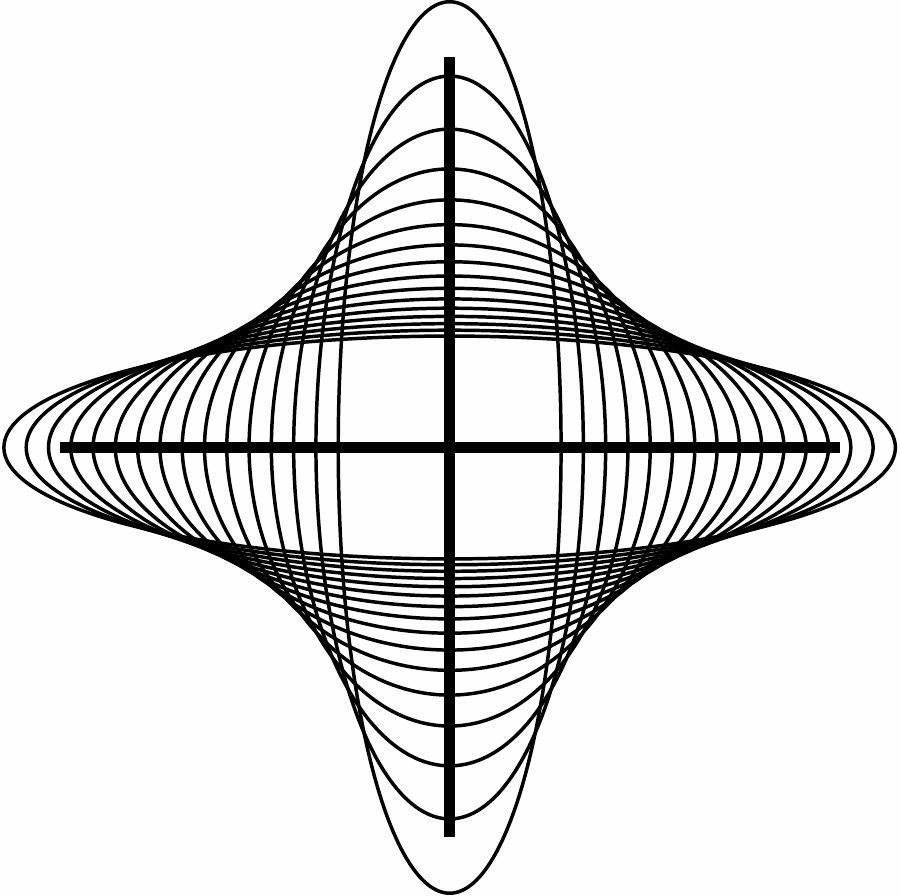}
\caption{The circle under the group action of positive diagonal matrices of
determinant $1$.}
\end{figure}

In the following we compute an asymptotic expansion for $\#\{ n \in \mathbb{Z}^d
\cap A(r\Omega) \}$ which includes both the effects of the diagonal
transformation $A$ and scaling factor $r$; we use this expansion to derive
uniform error estimates for an optimal stretching problem.  The resulting
theorem extends the results of \cite{AntunesFreitas2012, AriturkLaugesen2017,
GittinsLarson2017, vandenBergBucurGittins2016, vandenBergGittins2016}.  We note
that we do not completely recover the results of \cite{AriturkLaugesen2017,
LaugesenLiu2016}, since our approach can only represent curves which can be
realized as the boundary of a convex domain whose boundary has nowhere
vanishing Gauss curvature. However, there is some hope that the presented
framework could be used to fully generalize the results of Laugesen \& Liu
\cite{LaugesenLiu2016} and Ariturk \& Laugesen \cite{AriturkLaugesen2017} by
using more delicate bounds on the decay of the Fourier transform. Thus the
presented approach may be useful for proving further generalizations. 

\section{Main Result}

\subsection{Main Result} \label{mainresult}
Suppose that $\Omega \subset \mathbb{R}^d$ is a bounded convex domain whose
boundary $\partial \Omega$ is $C^{d+2}$ and has nowhere vanishing Gauss
curvature. Furthermore, suppose that $\Omega$ is symmetric with respect to each
coordinate hyperplane and balanced in the following sense.

\begin{definition} \label{balanced}
We say that a bounded convex domain $\Omega \subset \mathbb{R}^d$ is balanced
if 
$$
\left| \left\{ (x_1,x_2,\ldots,x_d) \in \Omega : x_j = 0 \right\}\right| = 1
\quad \text{for } j =1,2,\ldots,d, 
$$
where $|\cdot|$ denotes the $(d-1)$-dimensional Lebesgue measure. 
\end{definition}

Note that the balanced assumption does not restrict the domains for which the
below Theorem applies. Rather, the assumption that $\Omega$ is balanced is
equivalent to choosing the unique balanced representative $B \Omega$ for
$\Omega$, where $B$ is a positive diagonal matrix. 
Suppose that $A = \diag(a_1,a_2,\ldots,a_d)$ is a positive definite diagonal
matrix of determinant $1$.  To reiterate, we define
$$
A(r \Omega) = \{ A (rx) : x \in \Omega \},
$$
for scaling factor $r \ge 1$.  That is to say, $A(r \Omega)$ is the domain
$\Omega$ scaled by $r$ and transformed by $A$. 
\begin{figure}[h!]
\centering
\begin{tabular}{cc}
\includegraphics[width=.2\textwidth]{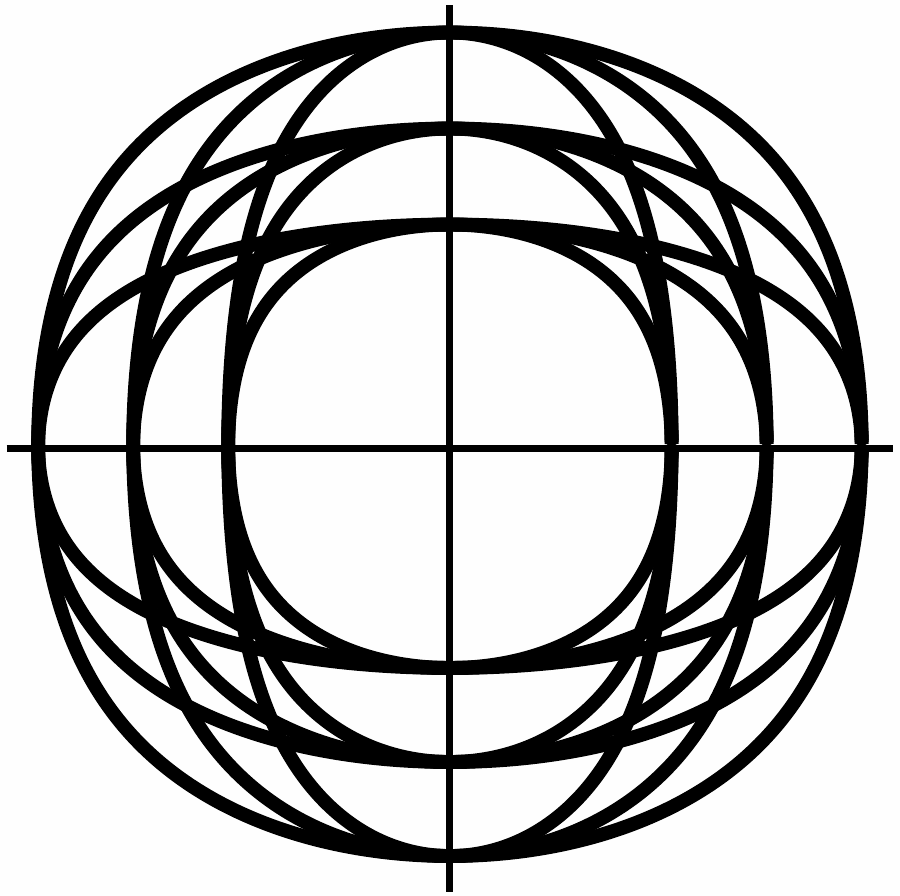} &
\includegraphics[width=.2\textwidth]{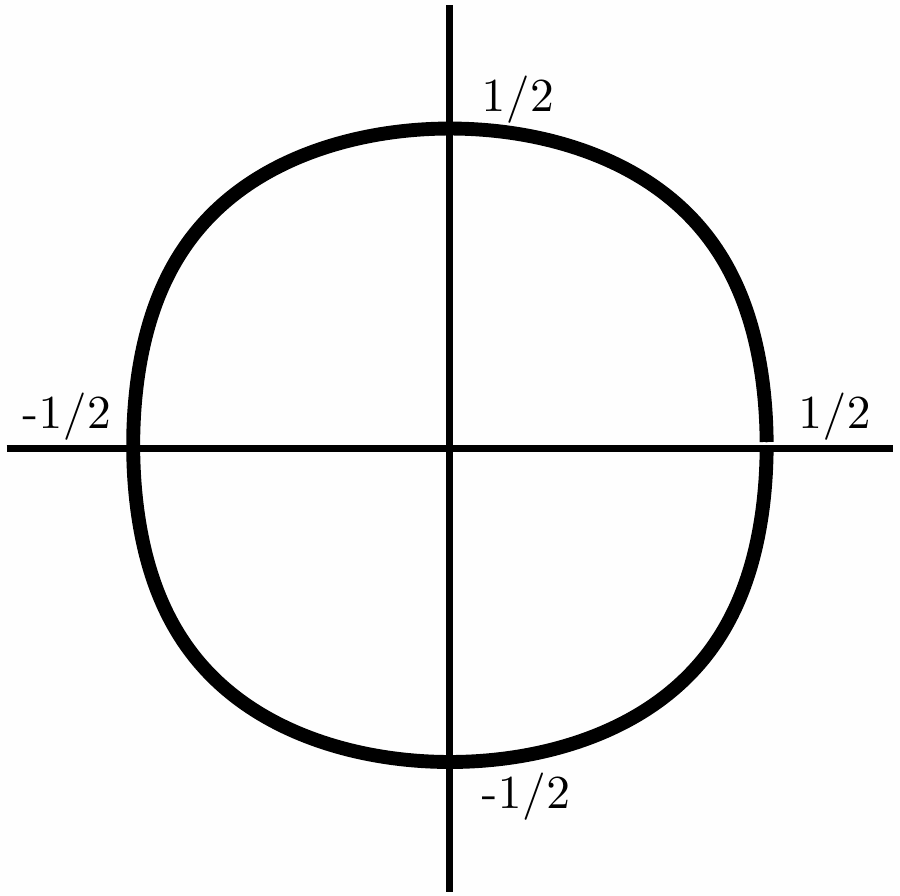} 
\end{tabular}
\caption{Each domain $\Omega \subset \mathbb{R}^d$ which contains the origin has
a unique balanced representative; the domains illustrated on the left share
the balanced representative illustrated on the right.}
\end{figure}

\begin{theorem} \label{thm1} Suppose $\Omega \subset \mathbb{R}^d$
and $A : \mathbb{R}^d \rightarrow \mathbb{R}^d$ are as described above. Then for
$d \ge 2$ and $r \ge 1$,
$$
\#\left\{ n \in \mathbb{Z}^d_{>0} \cap A(r\Omega) \right\}= \frac{1}{2^d}
|\Omega| r^d - \frac{1}{2^{d}} \left(\tr A^{-1} \right) r^{d-1} + \mathcal{O}
\left( \|A^{-1}\|^\frac{2d}{d+1}_\infty r^{d - \frac{2d}{d+1}} \right),
$$
where the implicit constant is independent of $A$ and $r$. Moreover, if
$$
A(r) = \argmax_A \, \# \left\{n \in \mathbb{Z}^d_{>0} \cap A(r\Omega) \right\},
$$
then
$$
\| A(r) - \Id \|_\infty =  \mathcal{O}\left( r^{-\frac{d-1}{2(d+1)}} \right)
\quad \text{as } r \rightarrow \infty,
$$
where the implicit constant is independent of $A$ and $r$.
\end{theorem}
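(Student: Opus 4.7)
The plan is to prove Theorem~\ref{thm1} in three stages: a symmetry reduction, a Poisson summation lattice count with smoothing, and a comparison of the expansion at $A(r)$ to that at $\Id$. First I would use the $(\mathbb{Z}/2)^d$ invariance of $\Omega$ to split the positive lattice count into unrestricted counts on coordinate subspaces. Writing $V_S = \{x \in \mathbb{R}^d : x_j = 0 \text{ for } j \notin S\}$ and $N(K) = \#(\mathbb{Z}^d \cap K)$, a M\"obius inversion after partitioning lattice points by their support yields
$$
\#\left\{\mathbb{Z}^d_{>0} \cap A(r\Omega)\right\} = \frac{1}{2^d}\sum_{S \subseteq \{1,\ldots,d\}} (-1)^{d-|S|}\, N\left(V_S \cap A(r\Omega)\right).
$$
The full-support term ($S = \{1,\ldots,d\}$) furnishes the main volume contribution $2^{-d}|\Omega|r^d$, the $d$ codimension-one terms produce the surface correction $-2^{-d}(\tr A^{-1}) r^{d-1}$ via the balanced identity $|V_{[d]\setminus\{j\}} \cap A(r\Omega)|_{d-1} = r^{d-1}/a_j$ (using $\det A = 1$), and higher-codimension terms contribute lower-order size that will be absorbed into the error (using $\prod_{j \in S} a_j \le \|A^{-1}\|_\infty^{d-|S|}$ and the fact that the error bound is non-trivial only when $\|A^{-1}\|_\infty \lesssim r$).

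Next, I would estimate each $N(V_S \cap A(r\Omega))$ by a Sierpi\'nski-type smoothing-and-Poisson argument. Fix a nonnegative $\phi \in C_c^\infty$ of unit integral, set $\phi_\epsilon = \epsilon^{-|S|}\phi(\cdot/\epsilon)$, and let $\chi_S$ be the indicator of $V_S \cap A(r\Omega)$. Poisson summation applied to $\chi_S * \phi_\epsilon$ gives
$$
\sum_n (\chi_S * \phi_\epsilon)(n) = |V_S \cap A(r\Omega)|_{|S|} + \sum_{n \neq 0} \widehat{\chi_S}(n)\, \widehat{\phi}(\epsilon n),
$$
and the difference from $N(V_S \cap A(r\Omega))$ is controlled by the number of lattice points in the $\epsilon$-neighborhood of $\partial(V_S \cap A(r\Omega))$, which is $O(\epsilon\, r^{|S|-1}\|A^{-1}\|_\infty)$ using the surface-area bound $|\partial A(r\Omega)|_{d-1} \lesssim r^{d-1}\|A^{-1}\|_\infty$. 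The scaling identity $\widehat{\chi_{A(r\Omega)}}(\xi) = r^d\widehat{\chi_\Omega}(rA\xi)$, combined with the classical decay $|\widehat{\chi_\Omega}(\xi)| \lesssim (1+|\xi|)^{-(d+1)/2}$ for a convex body with nonvanishing Gauss curvature and the elementary lower bound $|rA\xi| \ge r|\xi|/\|A^{-1}\|_\infty$ valid for diagonal positive $A$, bounds the Fourier tail by $O(r^{(d-1)/2}\|A^{-1}\|_\infty^{(d+1)/2}\epsilon^{-(d-1)/2})$. Optimizing $\epsilon \sim r^{-(d-1)/(d+1)}\|A^{-1}\|_\infty^{(d-1)/(d+1)}$ balances the two contributions at precisely $O(\|A^{-1}\|_\infty^{2d/(d+1)} r^{d-2d/(d+1)})$, which, summed over $S$ via the M\"obius formula, delivers the main expansion.

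For the convergence rate, evaluating the expansion at $\Id$ and at $A(r)$ and using optimality $N_{>0}(A(r)(r\Omega)) \ge N_{>0}(r\Omega)$ gives
$$
\bigl(\tr A(r)^{-1} - d\bigr)r^{d-1} \lesssim \|A(r)^{-1}\|_\infty^{2d/(d+1)} r^{d-2d/(d+1)}.
$$
Since $\|A(r)^{-1}\|_\infty \le \tr A(r)^{-1}$, this inequality first forces $\|A(r)^{-1}\|_\infty$ to be uniformly bounded for large $r$, and then yields $\tr A(r)^{-1} - d = O(r^{-(d-1)/(d+1)})$. A second-order Taylor expansion of $\tr A^{-1}$ around $\Id$ constrained by $\det A = 1$ gives $\tr A^{-1} - d \gtrsim \|A - \Id\|_\infty^2$ in a neighborhood of the identity, so $\|A(r) - \Id\|_\infty = O(r^{-(d-1)/(2(d+1))})$.

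The hardest part will be the Fourier step: both the boundary smoothing error and the Fourier tail depend on $A$ through $|\partial A(r\Omega)|$ and the minimum singular value of $A$, and isolating the clean single-parameter bound in $\|A^{-1}\|_\infty$ requires the precise sandwich between $\chi_{A(r\Omega)}$, its Minkowski dilations, and its convolutional smoothing, along with the optimal choice of $\epsilon$ indicated above.
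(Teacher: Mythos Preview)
Your plan for the expansion is essentially the paper's approach: Poisson summation with smoothing, the Fourier decay $|\widehat{\chi_\Omega}(\xi)|\lesssim|\xi|^{-(d+1)/2}$, the lower bound $|A\xi|\ge|\xi|/\|A^{-1}\|_\infty$, and inclusion--exclusion over coordinate subspaces. Two cosmetic differences: the paper convolves with an $A$-stretched bump $\varphi_{A,\delta}(x)=\delta^{-d}\varphi(A^{-1}x/\delta)$ rather than an isotropic one, which makes the sandwich land exactly on dilates $A((r\pm c\delta)\Omega)$ and avoids estimating $|\partial A(r\Omega)|$ directly; and the paper only carries the inclusion--exclusion to codimension two rather than running the full M\"obius sum. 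Your optimal $\epsilon$ and the paper's optimal $\delta$ are related by $\delta=\|A^{-1}\|_\infty\,\epsilon$, and both give the same error term.

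There is, however, a genuine gap in your convergence step. The comparison inequality
\[
\bigl(\tr A(r)^{-1}-d\bigr)\,r^{d-1}\ \lesssim\ \|A(r)^{-1}\|_\infty^{2d/(d+1)}\,r^{d-2d/(d+1)},
\]
combined with $\|A(r)^{-1}\|_\infty\le\tr A(r)^{-1}$, does \emph{not} force $\|A(r)^{-1}\|_\infty$ to be bounded. Writing $a=\|A(r)^{-1}\|_\infty$, the exponent $2d/(d+1)>1$ means the right-hand side grows faster in $a$ than the left-hand bound $a-d\le\tr A(r)^{-1}-d$, so the inequality is entirely consistent with $a\asymp r$; in that regime the error term is of order $r^d$ and the expansion is vacuous. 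Your argument actually yields only the dichotomy ``either $a=O(1)$ or $a\gtrsim r$,'' and the second alternative must be excluded by other means. The paper does this by a separate, elementary argument (its Step~4): when $a\ge r/c$ one slices $A(r\Omega)$ along the thin direction, bounds the positive-lattice count above by a strict lower Riemann sum for $2^{-d}|\Omega|r^d$, and obtains
\[
\#\bigl\{\mathbb{Z}_{>0}^d\cap A(r\Omega)\bigr\}\ \le\ (1-\varepsilon)\,\tfrac{1}{2^d}|\Omega|\,r^d,
\]
which contradicts optimality against $A=\Id$ for large $r$. Once boundedness of $a$ is in hand, your Taylor/AM--GM finish is correct and matches the paper's Step~5.
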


In the following remark, we describe specifically how the implicit constants in
Theorem \ref{thm1} depend on $\Omega \subset \mathbb{R}^d$, and provide
conditions under which the expansion and convergence rate results in Theorem
\ref{thm1} hold uniformly over a family of domains.

\begin{remark}
The implicit constants in Theorem \ref{thm1} which depend on the domain $\Omega
\subset \mathbb{R}^d$ can be chosen in terms of the following three quantities:
a lower bound on the Gauss curvature of $\Omega$, an upper bound on the diameter
of $\Omega$, and a lower bound on the inradius of $\Omega$. Therefore, the
result of Theorem \ref{thm1} holds uniformly over any family of admissible
domains, which have uniform bounds for these three quantities.  Indeed,
constants depending on $\Omega \subset \mathbb{R}^d$ enter the proof of Theorem
\ref{thm1} from three sources. First, we use a constant $C > 0$ such that
$\Omega \subset [-C,C]^d$; since $\Omega$ contains the origin it suffices to
choose $C$ equal to the diameter of $\Omega$. Second, we use a constant $c > 0$
from Lemma \ref{lem:geo2}; by the proof of Lemma \ref{lem:geo2}, this constant
can be chosen as one divided by the inradius of $\Omega$. Third, we implicitly
use a constant when using the bound in Lemma \ref{lem:decay} for the decay of
the Fourier transform of the indicator function for $\Omega$; this implicit
constant can be chosen in terms of a lower bound on the Gauss curvature of the
domain, cf.  \cite{Stein1993}.  
\end{remark}

When $\Omega \subset \mathbb{R}^d$ is a $d$-dimensional ellipsoid, the Theorem
implies the Dirichlet Laplacian results of Gittins \& Larson
\cite{GittinsLarson2017}.  Applying the Theorem for dimension $d=2$ recovers the
original result of Antunes \& Freitas \cite{AntunesFreitas2012} and agrees with
the error estimate in \cite{LaugesenLiu2016}. Specifically, in dimension $d=2$,
the set of all positive diagonal matrices of determinant $1$ is the
$1$-parameter family $A = \diag(1/a,a)$ for $a >0$ and the result of the Theorem
can be stated as:

\begin{corollary}
In the case $\Omega \subset \mathbb{R}^2$, the Theorem gives
$$
\#\left\{ n \in \mathbb{Z}^2_{>0} \cap A(r \Omega) \right\}  = \frac{1}{4} |\Omega| r^2 -
\frac{1}{4} \left( a + \frac{1}{a} \right) r + \mathcal{O} \left( a^{4/3}
r^{2/3} \right) ,
$$
and 
$$
| a(r) - 1 | = \mathcal{O}\left( r^{-1/6} \right).
$$
\end{corollary}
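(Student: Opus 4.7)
The plan is to simply specialize Theorem \ref{thm1} to dimension $d=2$, using the fact that every positive diagonal matrix of determinant $1$ in two dimensions can be parametrized as $A = \diag(1/a, a)$ with $a > 0$. Since the corollary is nothing but the $d=2$ instance of the theorem written out concretely, the task reduces to verifying that each quantity in the theorem's expansion takes the claimed form.

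First I would substitute $d = 2$ into the leading coefficient $2^{-d}$ to get $1/4$, and compute the surface-area-type term: $\tr A^{-1} = \tr \diag(a, 1/a) = a + 1/a$, which accounts for the second term $-\tfrac{1}{4}(a + 1/a)r$. For the error term I would compute the exponent $d - \tfrac{2d}{d+1} = 2 - 4/3 = 2/3$, and observe that the prefactor is $\|A^{-1}\|_\infty^{4/3} = \max(a, 1/a)^{4/3}$. Here I would invoke the symmetry of the problem under $a \leftrightarrow 1/a$: interchanging the two coordinates is an isometry of $\mathbb{Z}^2_{>0}$ and preserves $\Omega$ (which is symmetric with respect to both coordinate hyperplanes and balanced), so without loss of generality $a \geq 1$ and thus $\|A^{-1}\|_\infty^{4/3} = a^{4/3}$, giving the stated error $\mathcal{O}(a^{4/3} r^{2/3})$.

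For the convergence rate, I would apply the second conclusion of Theorem \ref{thm1} with $d = 2$, which yields
$$
\|A(r) - \Id\|_\infty = \mathcal{O}\!\left(r^{-\frac{d-1}{2(d+1)}}\right) = \mathcal{O}\!\left(r^{-1/6}\right).
$$
Writing $A(r) - \Id = \diag(1/a(r) - 1,\, a(r) - 1)$, I get $|a(r) - 1| \leq \|A(r) - \Id\|_\infty$, which delivers $|a(r) - 1| = \mathcal{O}(r^{-1/6})$ at once.

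Because the corollary is a direct specialization of the main theorem, no substantive obstacle arises; the only care needed is in matching the prefactor in the error term, which is handled by the coordinate-swap symmetry justifying the assumption $a \geq 1$. The remainder is bookkeeping.
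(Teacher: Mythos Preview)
Your proposal is correct and matches the paper's treatment: the paper presents this corollary as an immediate specialization of Theorem~\ref{thm1} to $d=2$ with $A=\diag(1/a,a)$, without giving a separate proof, and your computations (the coefficient $1/4$, $\tr A^{-1}=a+1/a$, the error exponent $2/3$, and the rate exponent $1/6$) are exactly what is needed. The only minor addition you make beyond the paper is the explicit symmetry remark justifying $\|A^{-1}\|_\infty=a$, which is consistent with the paper's standing convention $a_1\le\cdots\le a_d$ and $a=\|A^{-1}\|_\infty$.
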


As a second Corollary we can establish dual results for the nonnegative lattice
point problem, which corresponds to the Neumann Laplacian results of
Gittins \& Larson \cite{GittinsLarson2017}.

\begin{corollary}
Suppose $\Omega \subset \mathbb{R}^d$ and $A : \mathbb{R}^d \rightarrow
\mathbb{R}^d$ satisfy the hypotheses of the Theorem, and additionally 
suppose $1 \le \|A^{-1}\|_\infty \le C r$ for a fixed constant $C > 0$.
Then
$$
\# \left\{ n \in \mathbb{Z}^d_{\ge 0} \cap A(r\Omega) \right\}= \frac{1}{2^d}
|\Omega| r^d + \frac{1}{2^{d}} \left(\tr A^{-1} \right) r^{d-1} + \mathcal{O}
\left( \|A^{-1}\|^\frac{2d}{d+1}_\infty r^{d - \frac{2d}{d+1}} \right),
$$
where the implicit constant is independent of $A$ and $r$. Moreover, if
$$
A(r) = \argmin_A \, \#\{n \in \mathbb{Z}^d_{\ge 0} \cap A(r\Omega) \},
$$
then
$$
\| A(r) - \Id \|_\infty =  \mathcal{O}\left( r^{-\frac{d-1}{2(d+1)}} \right),
$$
where the implicit constant is independent of $A$ and $r$.
\end{corollary}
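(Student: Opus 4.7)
The plan is to reduce to Theorem 1 by decomposing the nonnegative lattice point count according to which coordinates vanish. For each $S \subseteq \{1,\dots,d\}$, let $\Omega_S = \Omega \cap \{x_i = 0 : i \in S\}$ and $A_{S^c} = \diag(a_i : i \notin S)$. Classifying each $n \in \mathbb{Z}^d_{\ge 0} \cap A(r\Omega)$ by the set of indices where $n_i = 0$ gives the disjoint decomposition
$$
\#\{n \in \mathbb{Z}^d_{\ge 0} \cap A(r\Omega)\} = \sum_{S \subseteq \{1,\dots,d\}} N_S, \qquad N_S := \#\{m \in \mathbb{Z}^{d-|S|}_{>0} \cap A_{S^c}(r\Omega_S)\}.
$$

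The term $N_\emptyset$ is exactly the positive-lattice-point count, treated by Theorem 1. For $S = \{j\}$, the cross-section $\Omega_j$ is a strictly convex symmetric domain in $\mathbb{R}^{d-1}$ with smooth boundary of nonvanishing Gauss curvature, and $A_{\{j\}^c}$ has determinant $1/a_j$. After rescaling by $a_j^{-1/(d-1)}$ to reduce to the unit-determinant case and applying Theorem 1 in dimension $d-1$ (or a direct Gauss-type lattice-point count for convex bodies with curvature), one obtains $N_{\{j\}} = |\Omega_j|\, r^{d-1}/(2^{d-1} a_j) + (\text{lower order})$. By the balanced hypothesis $|\Omega_j| = 1$, and summing over $j$ yields $\tfrac{1}{2^{d-1}}(\tr A^{-1})r^{d-1}$. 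Combined with the $-\tfrac{1}{2^d}(\tr A^{-1})r^{d-1}$ from $N_\emptyset$, this produces exactly $+\tfrac{1}{2^d}(\tr A^{-1})r^{d-1}$, as required.

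For $|S| \ge 2$ a crude volume bound $N_S \le |A_{S^c}(r\Omega_S)| \lesssim r^{d-|S|}\|A^{-1}\|_\infty^{|S|}$ suffices. The hypothesis $\|A^{-1}\|_\infty \le Cr$ allows this bound and the error terms produced by the $|S| \le 1$ applications of Theorem 1 to be absorbed into the stated error $\|A^{-1}\|^{2d/(d+1)}_\infty r^{d - 2d/(d+1)}$: in each case one checks that the difference between the exponents of $r$ and of $\|A^{-1}\|_\infty$ in the bound is the same positive number, so $\|A^{-1}\|_\infty \le Cr$ is exactly what makes the comparison work. The worst single term is $|S|=2$, giving $O(r^{d-2}\|A^{-1}\|_\infty^2)$, which compares to the target under this comparison.

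For the optimization statement, the expansion shows that minimizing the Neumann count over $A$ is, to leading order, equivalent to minimizing $\tr A^{-1}$, which by AM--GM attains its unique minimum $d$ at $A = \Id$. Comparing the expansion at the minimizer $A(r)$ with that at $\Id$ produces
$$
(\tr A(r)^{-1} - d)\, r^{d-1} \lesssim \|A(r)^{-1}\|^{2d/(d+1)}_\infty r^{d-2d/(d+1)};
$$
after bounding $\|A(r)^{-1}\|_\infty$ uniformly (an over-stretched domain would extend far along some coordinate axis and contain too many lattice points to be a minimizer) and invoking a quantitative AM--GM estimate of the form $\tr A^{-1} - d \ge c\|A - \Id\|^2_\infty$, one obtains $\|A(r) - \Id\|_\infty = O(r^{-(d-1)/(2(d+1))})$, exactly as in the optimization part of Theorem 1. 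The main obstacle is the clean invocation of Theorem 1 on the cross-section $\Omega_j$, since the balanced hypothesis on $\Omega$ pins down $|\Omega_j|=1$ but does not force $\Omega_j$ to be balanced in dimension $d-1$; this is resolved by using only the leading-term expansion for $N_{\{j\}}$ and pushing any discrepancy into the error via the $\|A^{-1}\|_\infty \le Cr$ hypothesis.
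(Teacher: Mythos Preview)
Your proposal is correct and close in spirit to the paper's route, but the organisation differs in a way worth noting. You stratify $\mathbb{Z}^d_{\ge 0}\cap A(r\Omega)$ by the zero set $S$ and count \emph{positive} lattice points $N_S$ in each cross section $\Omega_S$, invoking Theorem~\ref{thm1} in dimension $d-|S|$. The paper instead points to the arguments of Step~\ref{step3}: by the coordinate symmetry of $\Omega$ one has the identity
\[
\#\{n\in\mathbb{Z}^d_{\ge 0}\cap A(r\Omega)\}=\frac{1}{2^d}\sum_{S\subseteq\{1,\dots,d\}} T_S,
\qquad T_S:=\#\{m\in\mathbb{Z}^{d-|S|}\cap A_{S^c}(r\Omega_S)\},
\]
where $T_S$ is the \emph{full} lattice count on the cross section. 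Then $T_\emptyset$ is handled by Step~\ref{step1}, the singletons $T_{\{j\}}$ and the higher terms $T_S$ with $|S|\ge 2$ are exactly the intermediate estimates already established inside the proof of Step~\ref{step2}, and the $+$ sign in the corollary appears because the $T_S$ are added rather than inclusion--excluded. The advantage of the paper's framing is that Step~\ref{step1} requires no balanced hypothesis, so the issue you flag---that $\Omega_j$ need not be balanced in $\mathbb{R}^{d-1}$---never arises. Your workaround (keep only the leading term of $N_{\{j\}}$ and absorb the rest via $\|A^{-1}\|_\infty\le Cr$) is valid, but it re-derives by hand what Step~\ref{step2} already contains.

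For the optimisation part your outline matches the paper's Steps~\ref{step4}--\ref{step5}, adapted to the $\argmin$: an overstretched $A$ places many nonnegative lattice points on an axis and is therefore non-minimal, giving a uniform bound on $\|A(r)^{-1}\|_\infty$; comparison with $A=\Id$ then yields $\tr A(r)^{-1}-d\lesssim r^{-(d-1)/(d+1)}$, and the quantitative AM--GM inequality $\tr A^{-1}-d\gtrsim \|A-\Id\|_\infty^2$ finishes. One minor point: your ``crude volume bound'' $N_S\le |A_{S^c}(r\Omega_S)|$ is not automatic for arbitrary sets---it uses that $\Omega_S$ is convex and coordinate-symmetric, so that each positive lattice point $m$ determines a unit cube $[m-\vec{1},m]\subset A_{S^c}(r\Omega_S)$ (the same box argument the paper uses in Step~\ref{step4}).
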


The  assumption $1 \le \|A^{-1}\|_\infty \le C r$ is needed for the expansion to
hold, but not necessary for the convergence results as it serves to avoid the
case where $A(r \Omega)$ contains more than order $r^d$ lattice points on the
coordinate hyperplanes which is clearly non-optimal for the $\argmin$. The proof
of this Corollary follows from the arguments in Step \ref{step3} of the proof of
Theorem \ref{thm1}.

\subsection{Generalization}
In the following we describe a generalization of Theorem \ref{thm1}. In
particular, we remark how Theorem \ref{thm1} can be generalized to domains
$\Omega$ which are not necessarily symmetric with respect to each coordinate
hyperplane by considering the lattice points $\{ n \in \mathbb{Z}^d : n_j \not =
0, \forall j = 1,\ldots,d\}$ rather than the positive lattice points
$\mathbb{Z}^d_{>0}$.
 
\begin{remark} \label{gen}
Suppose that $\Omega \subset \mathbb{R}^d$ is a bounded convex domain whose
boundary $\partial \Omega$ is $C^{d+2}$ and has nowhere vanishing Gauss
curvature. Moreover, suppose $\Omega$ is balanced in the sense of Definition
\ref{balanced}, and contains the origin. Define $\mathbb{Z}^d_{\not = 0} :=
\{ n \in \mathbb{Z}^d : n_j \not = 0, \, \forall j = 1,\ldots,d\}$. Let $A =
\diag(a_1,a_2,\ldots,a_d)$ be a positive diagonal matrix of determinant $1$.
Then
$$
\# \left\{ n \in \mathbb{Z}^d_{\not=0} \cap A \left(r \Omega \right)
\right\} = |\Omega| r^d - \left( \tr A^{-1} \right) r^{d-1} + \mathcal{O} \left(
\|A^{-1}\|^\frac{2d}{d+1}_\infty r^{d - \frac{2d}{d+1}} \right), 
$$
where the implicit constant is independent of $A$ and $r$. Moreover, if
$$
A(r) = \argmax_A \, \# \left\{n \in \mathbb{Z}^d_{\not=0} \cap A
\left(r\Omega \right) \right\},
$$
then
$$
\| A(r) - \Id \|_\infty =  \mathcal{O}\left( r^{-\frac{d-1}{2(d+1)}} \right),
$$
where the implicit constant is independent of $A$ and $r$.  The proof of this
statement is immediate from Step \ref{step3} of the proof of Theorem \ref{thm1}.
\end{remark}

\section{Proof of main result}

\subsection{Proof strategy}
Before discussing the technical details, we describe the proof strategy.
The proof is divided into five steps. First, we establish a new lattice point
counting result which holds uniformly over a family of positive diagonal
matrices $A$ with determinant $1$. Specifically, we show  that
$$
\# \{ n \in \mathbb{Z}^d \cap A(r \Omega) \} = |\Omega| r^d + \mathcal{O} \left(
\|A^{-1}\|_\infty^\frac{2d}{d+1} r^{d - \frac{2d}{d+1}} \right),
$$
when $1 \le \|A^{-1}\|_\infty \le C r$ where $C > 0$ is a fixed constant.  The
key observation in the proof of this expansion is that the classical Fourier
analysis techniques used to study the Gauss circle problem can be applied if the
indicator function of the domain $A(r \Omega)$ is mollified by a bump function
which has been appropriately stretched and scaled.  Second, as a consequence of
this result we show that the number of lattice points on the coordinate
hyperplanes is, as expected, equal to $\left( \tr A^{-1} \right) r^{d-1}$ plus
an appropriate error term; specifically, 
$$
\# \left\{ n \in \mathbb{Z}^d \cap A  \left(r \bigcup_{j=1}^d \Omega_j \right)
\right\} = (\tr A^{-1}) r^{d-1} + \mathcal{O}
\left(\|A^{-1}\|_\infty^{\frac{2d}{d+1}} r^{d-\frac{2d}{d+1}} \right),
$$
where $\Omega_j$ denotes the intersection of $\Omega$ with the hyperplane
orthogonal to the $j$-th coordinate vector.  Third, we combine these two results
to produce a positive lattice point counting result for $A(r\Omega)$:
$$
\# \left\{ n \in \mathbb{Z}_{>0}^d \cap A(r \Omega) \right\} = 
\frac{1}{2^d} |\Omega| r^d - \frac{1}{2^d} \left( \tr A^{-1} \right) r^{d-1} + 
\mathcal{O} \left(\|A^{-1}\|_\infty^{\frac{2d}{d+1}} r^{d - \frac{2d}{d+1}} \right).
$$
Fourth, we show that this positive lattice point counting result implies that
$$
\|A(r) - \Id \| \rightarrow 0, \quad \text{as } r \rightarrow \infty,
\quad
\text{where} 
\quad
A(r) = \argmax_A \, \# \left\{n \in \mathbb{Z}^d_{>0} \cap A(r\Omega) \right\},
$$
where the $\argmax$ is taken over positive diagonal matrices $A$ of determinant
$1$. We note that clearly $\|A(r)^{-1}\|_\infty = \mathcal{O}(r)$ since
otherwise $A(r \Omega)$ will not contain any positive lattice points; we 
proceed by considering two cases. First, we consider the case
where
$$
\|A^{-1}\|_\infty \quad \text{is on the order of} \quad r,
$$
where our positive lattice point counting result for $A(r \Omega)$
provides no information because the error term is order $r^d$.  However, in such
an extreme situation we are able to independently show non-optimality. Second,
we assume that 
$$
\|A^{-1}\|_\infty = o(r),
$$
and use the positive lattice point counting result to show convergence.
Specifically, we assume
$$
\|A^{-1}\|_\infty = \psi(r) r, \quad \text{where} \quad \psi(r) \rightarrow
0, \quad \text{as} \quad r \rightarrow \infty.
$$ 
Substituting $\|A^{-1}\|_\infty = \psi(r) r$ into the positive lattice point
counting result for $A(r\Omega)$ gives
$$
\# \left\{ n \in \mathbb{Z}_{>0}^d \cap A(r\Omega) \right\} \le \frac{1}{2^d}
|\Omega| r^d - \left(\frac{1}{2^{d}} - C \psi(r)^\frac{d-1}{d+1} \right)
\psi(r) r^d,
$$
where $C$ is a fixed constant. Since $\psi \rightarrow 0$ the term in the
parentheses we eventually be positive, and hence, comparison to the case where
$A = \Id$ implies convergence. Fifth, and finally, given the fact that $A(r)$
converges to $\Id$ we establish the convergence rate
$$
\|A(r) - \Id\|_\infty = \mathcal{O}\left( r^{-\frac{d-1}{2(d+1)}} \right),
$$
using a standard arithmetic mean geometric mean argument.

\subsection{Useful lemmata} The proof of the main result relies on three
lemmata: two geometric in nature, and one related to the decay of the Fourier
transform of indicator functions of convex domains with nowhere vanishing Gauss
curvature. Lemmata \ref{lem:geo2} and \ref{lem:new} are proved in this section,
and are motivated by similar technical results in \cite{SteinShakarchi2011},
while Lemma \ref{lem:decay} appears in standard references, cf.,
\cite{Stein1993, SteinShakarchi2011, Travaglini2004}.

\begin{lemma} \label{lem:geo2}
Suppose that $\Omega \subset \mathbb{R}^d$ is a bounded convex open domain which
contains the origin. Then there exists a constant $c > 0$ such that for all $0 <
r < \infty$, all $0 < \delta < \infty$, and all $0 \le |y| \le \delta$ 
$$
x \in r \Omega \implies x + y \in (r + c \delta) \Omega.
$$
\end{lemma}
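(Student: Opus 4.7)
\textbf{Proof proposal for Lemma \ref{lem:geo2}.} The plan is a direct convexity argument, exploiting the fact that $\Omega$ is open and contains the origin to obtain an inscribed ball and then writing the target point as a convex combination of two points of $\Omega$.

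First, since $\Omega$ is open and $0 \in \Omega$, there exists $\rho > 0$ with $\overline{B(0,\rho)} \subset \Omega$; I will set $c = 1/\rho$ (this matches the description of the constant given in the Remark after Theorem \ref{thm1}). Fix $r,\delta > 0$, $x \in r\Omega$, and $y$ with $|y| \le \delta$. Writing $x = r\omega$ for some $\omega \in \Omega$, the key identity is
$$
\frac{x+y}{r + c\delta} \;=\; \frac{r}{r+c\delta}\,\omega \;+\; \frac{c\delta}{r+c\delta}\cdot\frac{y}{c\delta},
$$
which expresses $(x+y)/(r+c\delta)$ as a convex combination (the coefficients are nonnegative and sum to $1$).

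The first point $\omega$ lies in $\Omega$ by assumption. For the second, the bound $|y| \le \delta$ gives
$$
\left|\tfrac{y}{c\delta}\right| \;\le\; \tfrac{\delta}{c\delta} \;=\; \rho,
$$
so $y/(c\delta) \in \overline{B(0,\rho)} \subset \Omega$. Convexity of $\Omega$ then yields $(x+y)/(r+c\delta) \in \Omega$, that is, $x+y \in (r+c\delta)\Omega$, which is the desired conclusion. (If $y = 0$ one can argue directly using $0 \in \Omega$ and convexity, or simply read the identity above in the limit; no new difficulty arises.)

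I do not foresee a real obstacle here: once one has the inscribed ball $B(0,\rho)$, the statement reduces to the observation that adding a displacement of size $\delta$ can be absorbed by dilating $\Omega$ by an amount proportional to $\delta/\rho$. The only thing to be careful about is to ensure the coefficients in the convex combination are truly nonnegative and sum to $1$, which is automatic from the decomposition written above, and to pick $\rho$ small enough that $\overline{B(0,\rho)} \subset \Omega$ (so that the endpoint $y/(c\delta)$ is genuinely in $\Omega$ rather than merely in $\overline{\Omega}$).
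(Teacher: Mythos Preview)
Your proof is correct and is essentially the same argument as the paper's: both pick an inscribed ball $B(0,\rho)\subset\Omega$, set $c=1/\rho$, and use convexity. The only cosmetic difference is that the paper phrases the convexity step as the Minkowski-sum inclusion $r\Omega + c\delta\,\Omega \subseteq (r+c\delta)\Omega$, while you write the equivalent convex-combination identity for $(x+y)/(r+c\delta)$ explicitly.
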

\begin{proof}
Since $\Omega$ is open, bounded, and contains the origin $\vec{0} \in
\mathbb{R}^d$, there exists a constant $\varepsilon > 0$ such that
$$ 
\{ x \in \mathbb{R}^d : |x - \vec{0}| \le \varepsilon \} \subset \Omega.
$$
Set $c = 1/ \varepsilon$, and suppose that $0 < r < \infty$, $x \in r \Omega$,
and $0 < |y| \le \delta$ are given. Then
$$
y = \frac{|y|}{\varepsilon} \cdot \varepsilon \frac{y}{|y|} \in \{ x \in
\mathbb{R}^d : |x - \vec{0}| \le c \delta \varepsilon \} \subseteq c \delta
\Omega,
$$
and therefore,
$$
x + y \in r \Omega + c \delta \Omega \subseteq (r + c \delta) \Omega,
$$
where the final inclusion follows from the convexity of $\Omega$.
\end{proof}
\begin{figure}[h!]
\centering
\includegraphics[width=.19\textwidth]{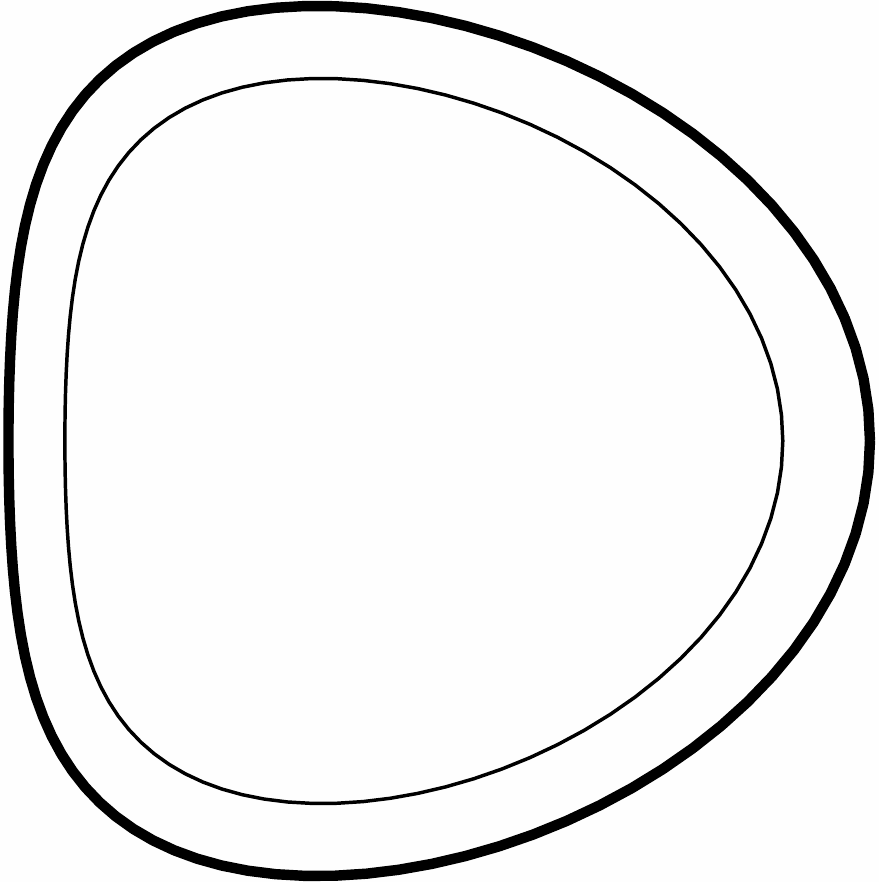}
\caption{Geometrically Lemma \ref{lem:geo2} implies that $\{x \in \mathbb{R}^d :
d(x,r\Omega) \le \delta\} \subseteq (r+c\delta) \Omega$.}
\end{figure}

The application of this Lemma to lattice point counting problems occurs when
developing bounds for indicator functions in terms of mollified indicator
functions.  Specifically, Lemma \ref{lem:geo2} is typically applied as follows.  Suppose
$\chi$ is the indicator function for the set $\Omega \subset \mathbb{R}^d$,
and define
$$
\chi_r(x) = \chi(x/r),
$$
which is the indicator function for $r \Omega$.  Let
$\varphi$ denote a $C^\infty$ bump function supported on the unit ball which
integrates to $1$. Set
$$
\varphi_\delta(x) = \delta^{-d} \varphi(x/\delta).
$$
Suppose $c > 0$ is chosen in accordance to Lemma \ref{lem:geo2}. Then we claim
that for all $x \in
\mathbb{R}^d$
$$
\chi_r(x) \le [\chi_{r + c \delta} * \varphi_\delta] (x),
$$
where $*$ denotes convolution. Indeed, by the choice of $c > 0$, for all
$|y| \le \delta$
$$
\chi_r(x) = 1 \implies \chi_{r+c\delta}(x-y) = 1.
$$
Therefore, if $\chi_r(x) = 1$, then
$$
[\chi_{r + c \delta} * \varphi_\delta] (x) = \int_{|y| \le \delta}
\chi_{r+c\delta}(x-y) \varphi_\delta(y) dy =  \int_{|y| \le \delta}
\varphi_\delta(y) dy  = 1.
$$
In the proof of the Theorem in the following section a similar result is
required.  However, in this case the indicator functions and bump functions
under consideration are transformed by a positive diagonal linear transformation
$A$ of determinant $1$. More precisely, we consider 
$$ 
\chi_{A,r}(x) = \chi(A^{-1} x/r) \quad \text{and} \quad \varphi_{A,\delta} =
\delta^{-d} \varphi(A^{-1}x/\delta),
$$ 
where $A$ is a positive diagonal matrix of determinant $1$. Let
$$
\chi_{A,r,\delta} = \chi_{A,r} * \varphi_{A,\delta}.
$$
In the following Lemma, we repeat the above analysis to develop upper and lower
bounds for $\chi_{A,r}$  in terms of the smoothed version $\chi_{A,r,\delta}$.

\begin{lemma} \label{lem:new}
Suppose $\chi_{A,r}$ and $\chi_{A,r,\delta}$ are as defined above. Then there
exists a constant $c > 0$ such that for all $r > 0 $, all $\delta \le r/(1+c)$,
and all positive diagonal matrices $A$ of determinant $1$
$$
\chi_{A,r - c \delta,\delta}(x) \le \chi_{A,r}(x) \le \chi_{A,r + c
\delta,\delta}(x),
$$
for all $x \in \mathbb{R}^d$. 
\end{lemma}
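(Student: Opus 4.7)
The plan is to reduce everything to the unstretched situation via the linear change of variables $u = A^{-1}y$, and then apply Lemma \ref{lem:geo2} directly. The first observation is that because $\det A = 1$, the bump function $\varphi_{A,\delta}$ integrates to $1$: substituting $u = A^{-1}y/\delta$ gives $\int \varphi_{A,\delta}(y)\,dy = \int \varphi(u)\,du = 1$. Moreover its support lies in $\{y : |A^{-1}y| \le \delta\}$. Thus the convolution $\chi_{A,r,\delta}(x) = \int \chi_{A,r}(x-y)\varphi_{A,\delta}(y)\,dy$ is an average, over a region of "stretched diameter" $\delta$ in the $A^{-1}$-image, of $\chi_{A,r}$.

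For the upper bound, I would assume $\chi_{A,r}(x) = 1$, which means $A^{-1}x \in r\Omega$. For any $y$ in the support of $\varphi_{A,\delta}$, we have $|A^{-1}y| \le \delta$, so Lemma \ref{lem:geo2} applied with the point $A^{-1}x$ and the displacement $-A^{-1}y$ yields $A^{-1}(x-y) = A^{-1}x - A^{-1}y \in (r + c\delta)\Omega$. Hence $\chi_{A,r+c\delta}(x-y) = 1$ throughout the support of $\varphi_{A,\delta}$, so the convolution equals $\int \varphi_{A,\delta}(y)\,dy = 1$, proving $\chi_{A,r+c\delta,\delta}(x) \ge \chi_{A,r}(x)$.

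For the lower bound, I would show the contrapositive: if $\chi_{A,r}(x) = 0$ is not enough since $\chi_{A,r-c\delta,\delta}$ takes values in $[0,1]$; instead I show that $\chi_{A,r-c\delta,\delta}(x) > 0$ forces $\chi_{A,r}(x) = 1$. If the convolution is positive, there exists $y$ with $|A^{-1}y| \le \delta$ and $A^{-1}(x-y) \in (r - c\delta)\Omega$. The hypothesis $\delta \le r/(1+c)$ gives $r - c\delta \ge \delta > 0$, so Lemma \ref{lem:geo2} applies with radius $r - c\delta$ and the displacement $A^{-1}y$, yielding $A^{-1}x = A^{-1}(x-y) + A^{-1}y \in ((r-c\delta) + c\delta)\Omega = r\Omega$, i.e., $\chi_{A,r}(x) = 1$.

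There is essentially no hard step: the only point that requires care is ensuring $r - c\delta > 0$ so that Lemma \ref{lem:geo2} is applicable in the lower bound, which is precisely what the smallness assumption $\delta \le r/(1+c)$ encodes. The constant $c$ is the same as in Lemma \ref{lem:geo2} and therefore depends only on $\Omega$ (in fact, on its inradius), so it is independent of $A$, $r$, and $\delta$, as needed for the downstream uniform estimates.
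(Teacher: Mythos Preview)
Your proposal is correct and follows essentially the same route as the paper: both arguments reduce to the unstretched situation via the substitution $y \mapsto A^{-1}y$ (the paper writes this as an explicit change of variables in the convolution integral, you phrase it as reasoning about $A^{-1}x$ and $A^{-1}y$ directly) and then invoke Lemma~\ref{lem:geo2} with the same constant $c$. Your treatment of the lower bound via ``$\chi_{A,r-c\delta,\delta}(x)>0 \Rightarrow \chi_{A,r}(x)=1$'' is the contrapositive of the paper's ``$\chi_{A,r}(x)=0 \Rightarrow$ integrand vanishes,'' and you make explicit the role of the hypothesis $\delta \le r/(1+c)$ in guaranteeing $r-c\delta>0$, which the paper uses implicitly.
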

\begin{proof}
By Lemma \ref{lem:geo2} we may choose a constant $c > 0$ such that for all $r >
0$ and all $\delta > 0$
$$
x \in r \Omega \wedge |y| \le \delta \implies x + y \in (r + c \delta) \Omega.
$$
To establish the upper bound it suffices to show that $\chi_{A,r}(x) = 1
\implies \chi_{A,r + c \delta,\delta} = 1$. By definition
$$
\chi_{A,r + c \delta,\delta}(x) = \int_{|A^{-1}y| \le \delta} \chi_{r+c\delta}(A^{-1}(x-y))
\varphi_\delta(A^{-1}y) dy.
$$
Since $A$ is a diagonal matrix of determinant $1$, by a change of variables of
integration we conclude
$$
\int_{|A^{-1}y| \le \delta} \chi_{r+c\delta}(A^{-1}(x-y))
\varphi_\delta(A^{-1}y) dy  = \int_{|y| \le \delta}
\chi_{r+c\delta}(A^{-1}x-y) \varphi_\delta(y) dy.
$$
If $\chi_{A,r}(x) = 1$, then by the choice of $c>0$ we have
$\chi_{r+c\delta}(A^{-1}x-y) =1$, when $|y| \le \delta$. Thus
$$
\int_{|y| \le \delta} \chi_{r+c\delta}(A^{-1}x-y) \varphi_\delta(y) dy = 
\int_{|y| \le \delta} \varphi_\delta(y) dy = 1,
$$
which establishes the upper bound. Next, to establish the lower bound we show
that $\chi_{A,r}(x) = 0 \implies \chi_{A,r- c \delta,\delta}(x) = 0$. Let
$\tilde{x} = x- y$ and $\tilde{r} = r - c \delta$.  By Lemma \ref{lem:geo2}
$$
\tilde{x} \in \tilde{r} \Omega \wedge |y| \le \delta \implies \tilde{x} + y \in
(\tilde{r} + c \delta) \Omega.
$$
Writing the contrapositive of this statement in terms of $x$ and $r$ gives
$$
x \not \in r \Omega  \wedge |y| \le \delta  \implies x - y \not \in (r - c
\delta) \Omega.
$$
Using the same change of variables of integration as above, we have
$$
\chi_{A,r-c\delta,\delta}(x) = \int_{|y|\le \delta} \chi_{r -c \delta}(A^{-1} x
- y) \varphi_\delta(y) dy.
$$
If $\chi_{A,r}(x) = 0$, then the derived implication implies that $\chi_{r-c\delta}(A^{-1} x - y) = 0$ for all $|y| \le \delta$; thus
$$
\int_{|y|\le \delta} \chi_{r -c \delta}(A^{-1} x -
y) \varphi_\delta(y) dy = 0,
$$
which completes the proof.
\end{proof}

\begin{lemma} \label{lem:decay}
Suppose $\Omega \subset \mathbb{R}^d$ is a bounded convex domain with $C^{d+2}$
boundary $\partial \Omega$ with nowhere vanishing Gauss curvature. If $\chi$ is
the indicator function of $\Omega$, then
$$
|\widehat{\chi}(\xi)| = \mathcal{O} \left( |\xi|^{-\frac{d+1}{2}} \right),
\quad \text{as } |\xi| \rightarrow \infty.
$$
\end{lemma}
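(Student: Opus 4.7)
The plan is to convert the volume integral defining $\widehat{\chi}$ into an oscillatory surface integral over $\partial \Omega$ via the divergence theorem, and then estimate that surface integral by the method of stationary phase, using the nowhere vanishing Gauss curvature to ensure a non-degenerate phase.

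For the first step, I would observe that for $\xi \neq 0$ the constant vector field $V_\xi = \xi/(-2\pi i |\xi|^2)$ satisfies $\nabla \cdot (V_\xi e^{-2\pi i x \cdot \xi}) = e^{-2\pi i x \cdot \xi}$. Since $\Omega$ is a bounded convex domain with $C^{d+2}$ boundary, the divergence theorem yields
$$
\widehat{\chi}(\xi) = \frac{-1}{2\pi i |\xi|^2} \int_{\partial \Omega} \bigl( \xi \cdot \nu(x) \bigr) e^{-2\pi i x \cdot \xi} \, d\sigma(x),
$$
where $\nu(x)$ is the outward unit normal. The prefactor together with the bound $|\xi \cdot \nu(x)| \le |\xi|$ supplies a factor of $|\xi|^{-1}$, so it suffices to show that the remaining oscillatory surface integral is $\mathcal{O}(|\xi|^{-(d-1)/2})$.

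Next, I would localize using a smooth finite partition of unity on $\partial \Omega$. After a suitable rotation of coordinates in each chart, the boundary is parametrized as a graph $\{(y, \gamma(y)) : y \in U\}$ for some open $U \subset \mathbb{R}^{d-1}$ and $C^{d+2}$ function $\gamma$, so each localized piece becomes $\int_U a(y) e^{-2\pi i \Phi_\xi(y)} \, dy$ with phase $\Phi_\xi(y) = y \cdot \xi' + \gamma(y) \xi_d$ and a smooth, compactly supported amplitude $a$. The critical points of $\Phi_\xi$ are precisely the points on $\partial \Omega$ whose outward normal is parallel to $\xi$, and a direct computation identifies the Hessian of $\Phi_\xi$ at each critical point with $|\xi|$ times the second fundamental form of $\partial \Omega$; the nowhere vanishing Gauss curvature hypothesis then ensures $|\det \mathrm{Hess}\, \Phi_\xi| \gtrsim |\xi|^{d-1}$.

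Finally, I would invoke the method of stationary phase: away from the critical points, iterated integration by parts against the operator $L = (-2\pi i |\nabla \Phi_\xi|^2)^{-1} \nabla \Phi_\xi \cdot \nabla$ (which preserves $e^{-2\pi i \Phi_\xi}$) produces arbitrarily rapid decay in $|\xi|$, while near each isolated non-degenerate critical point a Morse lemma change of variables reduces the integral to a Gaussian oscillatory integral of size $|\xi|^{-(d-1)/2}$. Summing the finitely many contributions and combining with the factor $|\xi|^{-1}$ from the divergence theorem step yields the claimed bound. The main technical obstacle is really the quantitative stationary phase estimate together with its dependence on the curvature, but this is entirely classical; the $C^{d+2}$ regularity of $\partial \Omega$ is more than enough to justify the finitely many integrations by parts and the Morse lemma, and a complete treatment can be found in Chapter VIII of \cite{Stein1993}, which the lemma explicitly cites.
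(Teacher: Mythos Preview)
Your proposal is correct and follows the standard approach; the paper does not actually prove this lemma but merely cites standard references \cite{Stein1993, SteinShakarchi2011, Travaglini2004} and remarks that the extra order of decay (over the surface measure bound $|\xi|^{-(d-1)/2}$) comes from an integration by parts, which is exactly the divergence-theorem step you describe. Your sketch is a faithful expansion of that one-line indication.
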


That is to say, the Fourier transform of an indicator function decays one order
better than the Fourier transform of the corresponding surface carried measure
$d\mu$, which decays like
$$
|\widehat{d\mu}| = \mathcal{O} \left( |\xi|^{-\frac{d-1}{2}} \right).
$$
The proof of this lemma involves an integration by parts of an expression
for the Fourier transform of the corresponding surface
carried measure, which is where the extra order of convergence arises.

\subsection{Proof of Theorem \ref{thm1}}
For clarity, we have divided the proof of the Theorem into five steps.
First, we fix notation. We say
$$
f(x) \lesssim_h g(x)
\quad \text{if and only if} \quad
f(x) \le C_h g(x),
$$
for a fixed constant $C_h >0$ only depending on $h$. Throughout the proof, we
assume that $r \ge 1$, $d \ge 2$, and that $\Omega \subset
\mathbb{R}^d$ is a bounded convex domain with a $C^{d+2}$ boundary $\partial
\Omega$ with nowhere vanishing Gauss curvature. In particular, we assume that
$$
\Omega \subset [-C,C]^d \quad \text{where} \quad C > 0 \quad \text{is a fixed
constant.}
$$
Let $A = \diag(a_1,a_2,\ldots,a_d)$ denote a positive diagonal matrix of
determinant $1$. Without loss of generality we may assume that $a_1 \le a_2 \le
\ldots \le a_d$, and set $a = 1/a_1 = \|A^{-1}\|_\infty$.


\begin{step} \label{step1}
Suppose $1 \le a \le C r$. Then
$$
\# \{ n \in \mathbb{Z}^d \cap A(r \Omega) \} = |\Omega| r^d +
\mathcal{O} \left( a^\frac{2d}{d+1} r^{d - \frac{2d}{d+1}} \right),
$$
where the implicit constant is independent of $A$ and $r$.
\end{step} 
\begin{proof}
Let $\chi$ denote the indicator function for the given domain $\Omega$, and set
$$
\chi_{A,r,\delta}(x) = [\chi_{A,r} * \varphi_{A,\delta}](x),
$$
where $\chi_{A,r} = \chi(A^{-1} x /r)$ and $\varphi_{A,\delta} = \delta^{-d}
\varphi(A^{-1} x /\delta)$ for some $C^\infty$ bump function $\varphi$ supported
on the unit ball in $\mathbb{R}^d$ and such that
$$
\int_{\mathbb{R}^d} \varphi(x) dx = 1.
$$
We denote our ``smoothed'' approximation of the number of lattice points
enclosed by $A(r \Omega)$ by
$$
N_{A,r,\delta} = \sum_{n \in \mathbb{Z}^d} \chi_{A,r,\delta} (n).
$$
Since $\chi_{A,r,\delta}$ is $C^\infty$ and of compact support, by the Poisson
summation formula
$$
N_{A,r,\delta} = \sum_{n \in \mathbb{Z}^d} \widehat{\chi}_{A,r,\delta}(n) = \sum_{n \in
\mathbb{Z}^d} r^d \widehat{\chi}(A n r) \widehat{\varphi}(A n \delta).
$$
Since $\widehat{\chi}(0) = |\Omega|$ and $\widehat{\varphi}(0) = 1$, we can break the
sum up into three parts:
$$
N_{A,r,\delta} = |\Omega| r^d + \sum_{0 < |n| < a/\delta}  r^d \widehat{\chi}(A n r) \widehat{\varphi}(A n \delta)
+ \sum_{|n| \ge a/\delta}  r^d \widehat{\chi}(A n r) \widehat{\varphi}(A n \delta).
$$
Since $\widehat{\chi}$ is the Fourier transform of an indicator function of a convex
domain with nowhere vanishing Gauss curvature, it follows from Lemma
\ref{lem:decay} that
$$
|\widehat{\chi}(\xi)| \lesssim_\Omega |\xi|^{- \frac{d+1}{2}} .
$$
Furthermore, since $\widehat{\varphi}$ is the Fourier transform of a $C^\infty$
function with compact support if follows that
$$
|\widehat{\varphi}(\xi)| \lesssim_\varphi |\xi|^{-N},
$$
for all $N \ge 0$. To evaluate the first sum we use the fact that
$|\widehat{\chi}|  \lesssim_\Omega |\xi|^{- \frac{d+1}{2}} $, and
$|\widehat{\varphi}| \lesssim_\varphi 1$
$$
\sum_{0 < |n| \le a/\delta}  r^d \widehat{\chi}(A n r) \widehat{\varphi}(A n
\delta) \lesssim_{\Omega,\varphi}  \sum_{0 < |n|
\le a/\delta} r^d r^{-\frac{d+1}{2}} a^{\frac{d+1}{2}} |n|^{-\frac{d+1}{2}} .
$$
Therefore,
$$
\sum_{0 < |n| \le a/\delta}  r^d \widehat{\chi}(A n r) \widehat{\varphi}(A n
\delta) \lesssim_{\Omega,\varphi} r^{\frac{d-1}{2}} a^\frac{d+1}{2} \sum_{0 < |n|
\le a/\delta} |n|^{-\frac{d+1}{2}} .
$$
The sum on the right hand side can be compared to the integral
$$
\int_{|x| \le a/\delta} |x|^{-\frac{d+1}{2}} dx = C_d \int_{0}^{a/\delta}
t^{-\frac{d+1}{2}} t^{d-1} dt = C_d a^\frac{d-1}{2} \delta^{-\frac{d-1}{2}}.
$$
Thus we conclude that
$$
\sum_{0 < |n| \le a/\delta}  r^d \widehat{\chi}(A n r) \widehat{\varphi}(A n
\delta) \lesssim_{\Omega,\varphi} r^\frac{d-1}{2} a^d
\delta^{-\frac{d-1}{2}}.
$$
For the second sum, we use the fact that
$$
|\widehat{\varphi}(\xi)| \lesssim_{\varphi} |\xi|^{-\frac{d}{2}} ,
$$
and use the same decay of $\widehat{\chi}$ as before. This yields
$$
\sum_{|n| \ge a/\delta}  r^d \widehat{\chi}(A n r) \widehat{\varphi}(A n
\delta) \lesssim_{\Omega,\varphi} \sum_{|n| \ge a/\delta} r^d
r^{-\frac{d+1}{2}} a^{\frac{d+1}{2}} |n|^{-\frac{d+1}{2}} a^{\frac{d}{2}}
|n|^{-\frac{d}{2}} \delta^{-\frac{d}{2}} .
$$
Hence
$$
\sum_{|n| \ge a/\delta}  r^d \widehat{\chi}(A n r) \widehat{\varphi}(A n
\delta) \lesssim_{\Omega,\varphi} r^{\frac{d-1}{2}} \delta^{-\frac{d}{2}}
a^{d + \frac{1}{2}} \sum_{|n| \ge a/\delta} |n|^{-d - \frac{1}{2}} .
$$
We can bound the sum on the right hand side by comparison to the integral
$$
\int_{|x|\ge a/\delta} |x|^{-d-1/2} dx = C_d \int_{a/\delta}^{\infty} t^{-d
-\frac{1}{2}}
t^{d-1} dt = C_d a^{-\frac{1}{2}} \delta^{\frac{1}{2}} .
$$
So 
$$
\sum_{|n| \ge a/\delta}   r^d \widehat{\chi}(A n r) \widehat{\varphi}(A n
\delta) \lesssim_{\Omega,\varphi}  r^{\frac{d-1}{2}} a^d
\delta^{-\frac{d-1}{2}} .
$$
Moreover, since the bound for both sums is the same
$$
\left| N_{A,r,\delta} - |\Omega| r^d \right|  \lesssim_{\Omega,\varphi}
r^\frac{d-1}{2} a^d \delta^{-\frac{d-1}{2}}.
$$
Set
$$
\delta = a^{\frac{2d}{d+1}} r^{-\frac{d-1}{d+1}}.
$$
Substituting this value of $\delta$ into the last inequality yields
$$
\left| N_{A,r,\delta} - |\Omega| r^d \right|  \lesssim_{\Omega,\varphi}
 a^\frac{2d}{d+1} r^{d - \frac{2d}{d+1}}.
$$
By Lemma \ref{lem:new} there exists a constant $c > 0$ such that for all $r > 0
$, all $\delta \le r/(1+c)$, and all positive diagonal matrices $A$ of
determinant $1$
$$
\chi_{A,r - c \delta,\delta}(x) \le \chi_{A,r}(x) \le \chi_{A,r + c \delta}(x),
$$
for all $x \in \mathbb{R}^d$. However, we do not in general know that $\delta
\le r/(1+c)$. Therefore, in the following we consider two cases:
$$
1 \le a \le (1+c)^{-\frac{d+1}{2d}} r, \quad \text{and} \quad
(1+c)^{-\frac{d+1}{2d}} r < a \le C  r.
$$
\subsubsection*{Case 1} If $1 \le a \le (1+c)^{-\frac{d+1}{2d}} r$, then 
$$
\delta = a^{\frac{2d}{d+1}} r^{-\frac{d-1}{d+1}} \implies
\delta \le r/(1+c).
$$
Thus, we may apply Lemma \ref{lem:new} to conclude
$$
\chi_{A,r - c \delta,\delta}(x) \le \chi_{A,r}(x) \le \chi_{A,r + c
\delta,\delta}(x).
$$
Therefore, by the definition of $N_{A,r,\delta}$
$$
N_{A,r-c\delta,\delta} \le \#\{n \in \mathbb{Z}^d \cap A(r\Omega) \} \le N_{A,r+c\delta,\delta}.
$$
Moreover, since $\delta \le r/(1+c)$ applying the bound derived above gives
$$
\left| N_{A,r+c\delta,\delta} -|\Omega| r^d \right|  \lesssim_{\Omega,\varphi}
a^\frac{2d}{d+1} r^{d - \frac{2d}{d+1}},
$$
and similarly,
$$
\left| N_{A,r-c\delta,\delta} -|\Omega| r^d \right|  \lesssim_{\Omega,\varphi}
a^\frac{2d}{d+1} r^{d - \frac{2d}{d+1}}.
$$
Therefore, we conclude
$$
\left| \#\{ n \in \mathbb{Z}^d \cap A (r \Omega) \} -  |\Omega| r^d \right|
\lesssim_{\Omega,\varphi} a^\frac{2d}{d+1} r^{d -
\frac{2d}{d+1}} .
$$
Note $\varphi$ can be fixed such that it only depends on $\Omega$ (more
specifically, dependent only on the dimension of $\Omega$), so the proof is
complete for this case.
\subsubsection*{Case 2} 
If $(1+c)^{-\frac{d+1}{2d}} r < a \le C  r$, then we define
$$
\tilde{c} := a (1 + c)^{\frac{d+1}{2d}} r^{-1}.
$$
Since the determinant of $A$ is equal to $1$, there exists $1 < k < d$ such that
$$
a_1 \le \ldots \le a_k \le 1 \le a_{k+1} \le \ldots \le a_d.
$$
Define
$$
\tilde{a}_j = \left\{ \begin{array}{cc} 
a_j \cdot \tilde{c}^{-1} & \text{for } 1 \le j \le k \\
a_j \cdot \tilde{c}^\frac{k}{d -k} & \text{for } k < j \le d
\end{array} \right. .
$$
Suppose $\tilde{A} = \diag(\tilde{a}_1,\tilde{a}_2,\ldots,\tilde{a}_d)$. Then
by construction
$$
\det(\tilde{A}) = 1 \quad \text{and} \quad \left\| \tilde{A}^{-1}
\right\|_\infty = (1+c)^{-\frac{d+1}{2d}} r.
$$
By the domain monotonicity of lattice point counting and the result from Case 1
$$
\# \{ n \in \mathbb{Z}^d \cap A (r \Omega) \} \le 
\# \{ n \in \mathbb{Z}^d \cap \tilde{A}( \tilde{c} r \Omega) \} =
\mathcal{O}\left( (\tilde{c} r)^d \right).
$$
Since $1 \le \tilde{c} \le c^{\frac{d+1}{2d}} C$ and the constants $c$ and $C$
only depend on $\Omega$ we conclude that
$$
\# \{ n \in \mathbb{Z}^d \cap A(r \Omega) \} = \mathcal{O} \left( r^d \right),
$$
where the implicit constant only depends on $\Omega$. This completes the proof
of Step \ref{step1}.

\end{proof}

\begin{step} \label{step2}
Suppose that $1 \le a \le C r$, and assume that $\Omega$ is balanced in the
sense of Definition \ref{balanced}. Let $\Omega_j$ denote the intersection of
$\Omega$ with the coordinate hyperplane orthogonal to the $j$-th coordinate
vector.  Then
$$
\# \left\{ n \in \mathbb{Z}^d \cap A  \left(r \bigcup_{j=1}^d \Omega_j \right)
\right\} = (\tr A^{-1}) r^{d-1} + \mathcal{O} \left(a^{\frac{2d}{d+1}}
r^{d-\frac{2d}{d+1}} \right),
$$
where the implicit constant is independent of $A$ and $r$.
\end{step}

\begin{proof}
If  $d=1$, then the statement is trivial. We consider two cases:
$d=2$ and $d > 2$.

\subsubsection*{Case 1} If the dimension $d=2$, then the set of positive
diagonal matrices of determinant $1$ is the $1$-parameter family $A =
\diag(1/a,a)$.  Therefore, it suffices to show that
$$
\# \left\{ n \in \mathbb{Z}^2 \cap A \left( r \bigcup_{j=1}^2 \Omega_j \right)
\right\} = \left( a + \frac{1}{a} \right) r + \mathcal{O} \left( a^{\frac{4}{3}}
r^{\frac{2}{3}} \right),
$$
for $A = \diag(1/a,a)$. In this case, $\Omega_1$ and $\Omega_2$ are the
intersection of $\Omega$ with the $y$-axis and $x$-axis, respectively. Moreover,
since we have assumed $\Omega$ is balanced $|\Omega_j| = 1$ for $j=1,2$.
Therefore, when $\Omega$ is scaled by $r$ and transformed by $A$, the total
number of points on the axes will be $a r + r/a + \mathcal{O}(1)$, and thus the
above statement holds.

\subsubsection*{Case 2} Suppose the dimension $d >2$. Let $A_j$ denote the
$(d-1) \times (d-1)$ positive diagonal matrix formed by removing the $j$-th row
and $j$-th column from $A$. Write:
$$
\# \left\{ n \in \mathbb{Z}^d \cap A_j(r\Omega_j) \right\} =
\# \left\{ n \in \mathbb{Z}^d \cap  a_j^{\frac{1}{d-1}} A_j \left(r
a_j^{-\frac{1}{d-1}} \Omega_j \right) \right\}.
$$
Observe that 
$$
\det a_j^{\frac{1}{d-1}} A_j = 1 \quad \text{and} \quad
\left\| \left(a_j^{\frac{1}{d-1}} A_j \right)^{-1} \right\|_\infty \le
\left( a_j^{-\frac{1}{d-1}} \right) a.
$$ 
Therefore, by the result from
Step \ref{step1}:
$$
\left| \# \left\{ n \in \mathbb{Z}^d \cap a_j^{\frac{1}{d-1}} A_j  \left(r
a_j^{-\frac{1}{d-1}} \Omega_j \right) \right\} - |\Omega_j| a_j^{-1} r^{d-1}
\right| \lesssim_\Omega 
\left(a_j^{-\frac{1}{d-1}} a \right)^\frac{2(d-1)}{d} 
\left( a_j^{-\frac{1}{d-1}} r \right)^{(d-1) - \frac{2(d-1)}{d}} .
$$
Observe that the total contribution of $a_j$ to the right hand side of this
inequality is
$$
a_j^{\left( -\frac{1}{d-1} \right) \left( \frac{2(d-1)}{d} + (d-1) -
\frac{2(d-1)}{d} \right)} = a_j^{-1}.
$$
Therefore, bounding $a_j^{-1}$ by $a = \|A^{-1}\|_\infty$ gives
$$
\left| \# \left\{ n \in \mathbb{Z}^d \cap a_j^{\frac{1}{d-1}} A_j  \left(r
a_j^{-\frac{1}{d-1}} \Omega_j \right) \right\} - |\Omega_j| a_j^{-1} r^{d-1}
\right| \lesssim_\Omega a \cdot \left( a^\frac{2(d-1)}{d}  r^{(d-1) -
\frac{2(d-1)}{d}} \right).
$$
A direct computation shows that
$$
\left( a^\frac{2d}{d+1} r^{d - \frac{2d}{d+1}} \right) \left( a^\frac{2(d-1)}{d}
r^{(d-1) - \frac{2(d-1)}{d}}\right)^{-1}  = a^\frac{2}{d^2+d} r^\frac{d^2 + d -
2}{d^2+d}.
$$
Thus, since we have assumed that $a \le C r$, it follows that
$$
a \cdot \left( a^\frac{2(d-1)}{d}  r^{(d-1) - \frac{2(d-1)}{d}} \right)
\lesssim_\Omega a^\frac{2d}{d+1} r^{d - \frac{2d}{d+1}}.
$$
Therefore, we obtain the bound
$$
\left| \# \{ n \in \mathbb{Z}^d \cap A_j(r\Omega_j) \} - |\Omega_j| a_j^{-1}
r^{d-1} \right| \lesssim_\Omega  a^\frac{2d}{d+1} r^{d -
\frac{2d}{d+1}}.
$$
By the balanced assumption $|\Omega_j| = 1$. Therefore, summing over
$j=1,\ldots,d$ yields
$$
\left| \sum_{j=1}^d \# \{ n \in \mathbb{Z}^d \cap A_j(r\Omega_j) \} - (\tr A^{-1})
r^{d-1} \right| \lesssim_\Omega a^\frac{2d}{d+1} r^{d - \frac{2d}{d+1}}.
$$
Next, we will use a similar argument to analyze $\mathbb{Z}^{d} \cap A_{j,k}
(r \Omega_{j,k})$, where $\Omega_{j,k} = \Omega_j \cap \Omega_k$ and $A_{j,k}$
denotes the $(d-2) \times (d-2)$ matrix formed by removing the $j$-th and $k$-th
rows, and $j$-th and $k$-th columns from $A$. Write:
$$
\# \left\{ n \in \mathbb{Z}^d \cap A_{j,k}(r\Omega_{j,k}) \right\} = \# \left\{
n \in \mathbb{Z}^d \cap (a_j a_k)^{\frac{1}{d-2}} A_{j,k} \left(r (a_j
a_k)^{-\frac{1}{d-2}} \Omega_j \right) \right\}.
$$
In this case, applying Step \ref{step1} yields:
$$
\left| \# \left\{ n \in \mathbb{Z}^d \cap (a_j a_k)^{\frac{1}{d-2}} A_{j,k}
\left(r (a_j a_k)^{-\frac{1}{d-2}} \Omega_j \right) \right\} - |\Omega_{j,k}|
a_j^{-1} a_k^{-1} r^{d-2} \right| 
$$
$$
\lesssim_\Omega \left( (a_j a_k)^{-\frac{1}{d-2}} a\right)^{\frac{2(d-2)}{d-1}}
\left( (a_j a_k)^{\frac{1}{d-2}} r\right)^{(d-2) - \frac{2(d-2)}{d-1}}.
$$
Observe that the total contribution of $a_j$ and $a_k$ to the right hand side is
$$
(a_j a_k)^{\left(-\frac{1}{d-2}\right) \left(\frac{2(d-2)}{d-1} + (d-2) -
\frac{2(d-2)}{d-1} \right)} = a_j^{-1} a_k^{-1}.
$$
Therefore, bounding $a_j^{-1} a_k^{-1}$ by $a^2$ yields the bound
$$
\left| \# \left\{ n \in \mathbb{Z}^d \cap A_{j,k}(r\Omega_{j,k}) \right\}  
- |\Omega_{j,k}|
a_j^{-1} a_k^{-1} r^{d-2} \right| 
\lesssim_\Omega a^2 \left( a^{\frac{2(d-2)}{d-1}}
 r^{(d-2) - \frac{2(d-2)}{d-1}} \right).
$$
A direct computation yields:
$$
\left( a^\frac{2d}{d+1} r^{d - \frac{2d}{d+1}} \right) \left(
a^\frac{2(d-2)}{d-1} r^{(d-2) - \frac{2(d-2)}{d-1}}\right)^{-1}  =
a^\frac{4}{d^2-1} r^\frac{2d^2 - 6}{d^2-1}.
$$
Therefore, since we have assumed $a \le C r$, it follows that
$$
a^2 \left( a^{\frac{2(d-2)}{d-1}} r^{(d-2) - \frac{2(d-2)}{d-1}} \right)
\lesssim_\Omega a^\frac{2d}{d+1} r^{d - \frac{2d}{d+1}} .
$$
Similarly, it follows that
$$
|\Omega_{j,k}| a_j^{-1} a_k^{-1} r^{d-2} \lesssim_\Omega a^\frac{2d}{d+1} r^{d -
\frac{2d}{d+1}}.
$$
Therefore, we conclude that
$$
\# \left\{ n \in \mathbb{Z}^d \cap A_{j,k}(r\Omega_{j,k}) \right\}
\lesssim_\Omega a^\frac{2d}{d+1} r^{d - \frac{2d}{d+1}} .
$$
Summing over $1 \le j < k \le d$ gives
$$
\sum_{1 \le j < k \le d}  \# \left\{ n \in \mathbb{Z}^d \cap
A_{j,k}(r\Omega_{j,k}) \right\}  \lesssim_\Omega a^\frac{2d}{d+1} r^{d -
\frac{2d}{d+1}}.
$$
Recall that we previously established the inequality
$$
\left| \sum_{j=1}^d \# \left\{ n \in \mathbb{Z}^d \cap A_j(r\Omega_j) \right\} -
(\tr A^{-1}) r^{d-1} \right| \lesssim_\Omega a^\frac{2d}{d+1} r^{d -
\frac{2d}{d+1}} .
$$
These last two inequalities can be used to deduce the result in combination with
the observation that:
$$
\sum_{j=1}^d \# \left\{ n \in \mathbb{Z}^d \cap A_j(r\Omega_j) \right\} -
\sum_{1 \le j < k \le d} \# \left\{ n \in \mathbb{Z}^d \cap
A_{j,k}(r\Omega_{j,k}) \right\} \le \# \left\{ n \in \mathbb{Z}^d \cap A
\left(r \bigcup_{j=1}^d \Omega_j \right) \right\},$$
and
$$
  \# \left\{ n \in \mathbb{Z}^d \cap A
\left(r \bigcup_{j=1}^d \Omega_j \right) \right\}\le \sum_{j=1}^d \# \{ n \in
\mathbb{Z}^d \cap A_j(r\Omega_j) \}.
$$
\end{proof}

\begin{step} \label{step3}
Assume that $\Omega$ is balanced in the sense of Definition \ref{balanced} and
is symmetric with respect to each coordinate hyperplane. Then
$$
\# \left\{ n \in \mathbb{Z}_{>0}^d \cap A(r \Omega) \right\} = 
\frac{1}{2^d} |\Omega| r^d - \frac{1}{2^d} \left( \tr A^{-1} \right) r^{d-1} + 
\mathcal{O} \left(a^{\frac{2d}{d+1}} r^{d - \frac{2d}{d+1}} \right).
$$
\end{step}
\begin{proof}
We partition the proof into two cases:
$$
1 \le a \le C r, \quad \text{and} \quad C r < a < \infty.
$$
\subsubsection*{Case 1} If $1 \le a \le C r$, then the results from Steps
\ref{step1} and \ref{step2} hold.  By the assumed symmetry of the domain, the
number of positive lattice points contained in $\Omega$ is equal to the number
of lattice points in $\Omega$ minus those in $\bigcup_{j=1}^d \Omega_j$
divided by $2^d$:
$$
\# \left\{ n \in \mathbb{Z}^d_{>0} \cap A(r \Omega) \right\} = \frac{1}{2^d}
\left( \# \left\{ n \in \mathbb{Z}^d \cap A(r \Omega) \right\} - \# \left\{ n
\in \mathbb{Z}^d \cap A \left(r \bigcup_{j=1}^d \Omega_j \right) \right\}
\right).
$$
Combing the results of Steps \ref{step1} and \ref{step2} yields the result.

\subsubsection*{Case 2} If $C r < a < \infty$, then we argue as follows. Recall
that $C > 0$ is a constant such that $\Omega \subset [-C,C]^d$. Therefore, if
$a > C r$, then $a_1 < 1/(C r)$ and hence
$$
A(r \Omega) \subseteq (-1,1) \times \mathbb{R}^{d-1}.
$$
In particular, it follows that
$$ 
\# \{ n \in \mathbb{Z}^d_{>0} \cap A(r \Omega) \} = 0.
$$
Therefore, the statement to prove reduces to
$$
0 = \frac{1}{2^d} |\Omega| r^d - \frac{1}{2^d} \left( \tr A^{-1} \right) r^{d-1}
+ \mathcal{O} \left(a^{\frac{2d}{d+1}} r^{d - \frac{2d}{d+1}} \right),
$$
which trivially holds because the error term dominates the right hand side since
$C r < a < \infty$.
\end{proof}

\begin{step} \label{step4}
Suppose that $\Omega$ is balanced in the sense of Definition \ref{balanced} and
is symmetric with respect to each coordinate hyperplane. Let
$$
A(r) = \argmax_A \, \# \left\{n \in \mathbb{Z}^d_{>0} \cap A(r\Omega) \right\},
$$
where the $\argmax$ ranges over all positive diagonal matrices $A$ of
determinant $1$.  Write $A(r) = \diag( a_1(r),a_2(r),\ldots,a_d(r))$.  Without
loss of generality suppose $a_1(r) \le a_2(r) \le \cdots \le a_d(r)$ and let
$a(r) = 1/a_1(r) = \|A^{-1}(r)\|_\infty$. Then
$$
\lim_{r\rightarrow \infty} a(r) = 1.
$$
\end{step}

\begin{remark} Two technical remarks are in order about the definition
of $A(r)$.  First, we argue why the $\argmax$ exists. As noted in Step
\ref{step3}, if $C r < a < \infty$, then $A(r\Omega)$ will not contain any
positive lattice points; therefore, the admissible $A$ for the $\argmax$ can be
restricted to those satisfying 
$$ 
1 \le \left\|A^{-1} \right\|_\infty \le C r \implies A(r \Omega) \subset
\left[-(Cr)^d, (C r)^2 \right]^d, 
$$ 
where $C > 0$ is a constant such that $\Omega \subset [-C,C]^d$.
Since the set $[-(Cr)^2,(Cr)^2]^d$ contains finitely many positive lattice
points, we conclude that the $\argmax$ exists.  Second, when the $\argmax$ is
not unique, $A(r)$ should be interpreted as being equal to an arbitrary element
from the maximal set; all convergence results are independent of this choice.
\end{remark}

\begin{proof}[\textbf{\bf Proof of Step \ref{step4}}]
By the argument in Case 2 of Step \ref{step3}, we may assume $1 \le a(r) \le C
r$.  First, we will suppose that there exists a constant $c > 0$ such that
$$
 \frac{1}{c} r \le a(r) \le C r,
$$
for arbitrarily large values $r$. That is to say, we suppose that there exists a
sequence $r_n$ tending towards infinity such that $r_n/c \le a(r_n) \le C r_n$.
We show that this assumption leads to a contradiction, which implies that
$$
a(r)/r \rightarrow 0, \quad \text{as } r \rightarrow \infty.
$$
While producing this contradiction, we write $A = A(r)$ and $a = a(r)$ to
simplify notation. Let $\chi$ denote the indicator function for $\Omega$. Then
$$
\# \left\{ n \in \mathbb{Z}^d_{>0} \cap A(r\Omega) \right\} = \sum_{n \in
\mathbb{Z}^d_{>0}} \chi(A^{-1} n/r ).
$$
Let $(n_1,n^\prime) \in \mathbb{Z}_{>0} \times \mathbb{Z}_{>0}^{d-1}$, and let
$A_1$ denote the $(d-1) \times (d-1)$ matrix formed by removing the $1$-st row
and $1$-st column from $A$.  With this notation
$$
\# \left\{ n \in \mathbb{Z}^d_{>0} \cap A(r\Omega) \right\} = 
\sum_{n_1 \in \mathbb{Z}_{>0}} \sum_{n^\prime \in \mathbb{Z}^{d-1}_{>0}} \chi
\left( \frac{n_1}{a_1 r},\frac{A_1^{-1} n^\prime}{r} \right).
$$
Since by assumption $r/c \le a \le C r$, and $a = 1/a_1$, we have
$$
\frac{1}{C} \le r a_1 \le c .
$$
For simplicity, assume $r a_1 = c$. In the course of the proof, we will only use
the fact that $ra_1$ is bounded above by a fixed constant. We remark that
the method of estimating the sum in this part of the proof is similar to the
arguments in \cite{GittinsLarson2017, vandenBergGittins2016}. With the notation
$r a_1 = c$
$$
\# \{ n \in \mathbb{Z}^d_{>0} \cap A(r\Omega) \} = 
\sum_{n_1 \in \mathbb{Z}_{>0}} \sum_{n^\prime \in \mathbb{Z}^{d-1}_{>0}} \chi
\left( \frac{n}{c},\frac{A_1^{-1} n^\prime}{r} \right).
$$
We assert that:
$$
\sum_{n^\prime \in \mathbb{Z}^{d-1}_{>0}} \chi
\left( \frac{n}{c},\frac{A_1^{-1} n^\prime}{r} \right)  \le
\int_{\mathbb{R}^{d-1}_{\ge 0}} \chi \left( \frac{n_1}{c}, \frac{A_1^{-1}
x^\prime}{r} \right) dx^\prime.
$$
Indeed, each lattice point $(n_1,n^\prime)$ can be identified with the
$(d-1)$-dimensional cube with
sides parallel to the coordinates axes and vertices $(n_1,n^\prime)$ and
$(n_1,n^\prime-\vec{1})$, where $\vec{1}$ denotes a $(d-1)$-dimensional vector
of ones. Since the set $A(r \Omega)$ is convex, each of
these cubes is contained in $A(r \Omega)$ and their first coordinate is equal to
$n_1$.  Moreover, this collection of cubes is disjoint since each pair of
vertices is unique. Therefore, the sum is a lower approximation of the integral.
Thus we have
$$
\# \left\{ n \in \mathbb{Z}^d_{>0} \cap A(r\Omega) \right\} 
\le
\sum_{n_1 \in \mathbb{Z}_{>0}} \int_{\mathbb{R}^{d-1}_{\ge 0}} \chi \left(
\frac{n_1}{c}, \frac{A_1^{-1} x^\prime}{r} \right) dx^\prime.
$$
Define the function $f_{A_1,r} : \mathbb{R} \rightarrow \mathbb{R}$ by 
$$
f_{A_1,r}(x_1) = \int_{\mathbb{R}^{d-1}_{\ge 0}} \chi \left(
\frac{x_1}{c}, \frac{A_1^{-1} x^\prime}{r} \right) dx^\prime.
$$
The function $f_{A_1,r}(x_1)$ is supported on some interval $[0,b]$ such that $0
< b < c  C$, where $C > 0$ is a constant such that $\Omega \subset [-C,C]^d$.
Define
$$ 
\Omega_{x_1/c} := \Omega \cap \left\{ \frac{x_1}{c} \right\} \times
\mathbb{R}^{d-1} \quad \text{such that} \quad f_{A_1,r}(x_1) = \frac{1}{2^{d-1}}
|A_1(r \Omega_{x_1/c})|,
$$ 
where $|\cdot|$ denotes the $(d-1)$-dimensional Lebesgue measure, and where the
factor $1/2^{d-1}$ arises since the integral is taken over
$\mathbb{R}^{d-1}_{\ge 0}$.  Since $\det(A_1) \cdot a_1 = 1$ and $a_1 = c/r$ it
follows that 
$$
f_{A_1,r}(x_1) =   \frac{1}{2^{d-1}} \frac{r}{c}|r \Omega_{x_1/c}| =
\frac{1}{2^{d-1}} r^d \frac{1}{c} |\Omega_{x_1/c}|.
$$
Since $\Omega$ is symmetric about the coordinate axes and is strictly convex,
the maximum value of $f_{A_1,r}(x_1)$ occurs at $x_1 = 0$, $f_{A_1,r}(x_1)$ is strictly
decreasing on $[0,b]$, and $f_{A_1,r}(b) = 0$.  Moreover, since we assume the
boundary of $\Omega$ is $C^{d+2}$ the function $f_{A_1,r}$ is certainly $C^1$,
which is sufficient for our purposes.  Integrating $f_{A_1,r}$ on $[0,b]$ yields
$$
\int_0^b f_{A_1,r}(x_1) dx_1 = \frac{1}{2^{d-1}}  r^d \int_0^b \frac{1}{c}
|\Omega_{x_1/c}| dx_1 = \frac{1}{2^d} |\Omega| r^d.
$$
However, our previous arguments show that
$$
\# \left\{ n \in \mathbb{Z}^d_{>0} \cap A(r\Omega) \right\} \le \sum_{n_1 \in
\mathbb{Z}_{>0}} f_{A_1,r}(x_1) = \frac{1}{2^{d-1}} r^d \sum_{n_1=1}^{\lfloor b
\rfloor} \frac{1}{c} |\Omega_{x_1/c}|.
$$
We assert that there exists $\varepsilon > 0$ such that
$$
\sum_{n_1=1}^{\lfloor b \rfloor} \frac{1}{c} |\Omega_{n_1/c}| \le
(1-\varepsilon) \int_0^b \frac{1}{c} |\Omega_{x_1/c}| dx_1.
$$
Indeed, the sum on the left hand side is a lower Riemann sum for the integral of
the strictly decreasing $C^1$ function $(1/c)|\Omega_{x_1/c}|$. Moreover, the
constant $\varepsilon > 0$ can be chosen to hold uniformly for any lower Riemann
sum of the integral which discretizes the integral into at most $\lfloor c C
\rfloor$ pieces. 
\begin{figure}[h!]
\centering
\includegraphics[width=.25\textwidth]{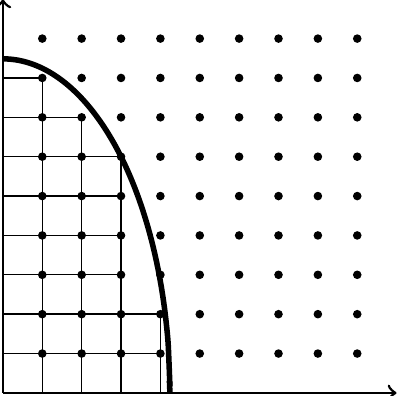}
\caption{An approximation of the area enclosed by an ellipse by squares
$[i_1-1,i_1] \times [i_2-1,i_2]$.}
\end{figure}

Therefore, we conclude that
$$
\# \{n \in \mathbb{Z}^d_{>0} \cap A(r\Omega) \} \le \sum_{n_1=1}^{\lfloor b
\rfloor} f_{A_1,r}(n_1) \le (1
- \varepsilon) \int_0^b f_{A_1,r}(x_1) dx_1 = (1-\varepsilon) \frac{1}{2^d} |\Omega|
r^d.
$$
However, applying Step \ref{step3} with $A = \Id$ yields
$$
\# \{ n \in \mathbb{Z}_{>0}^d \cap \Id(r \Omega) \} = 
\frac{1}{2^d} |\Omega| r^d - \frac{1}{2^d} d r^{d-1} + 
\mathcal{O} \left( r^{d - \frac{2d}{d+1}} \right).
$$
When $r$ is sufficiently large
$$
-\varepsilon r^d <
- \frac{1}{2^d} d r^{d-1} + \mathcal{O} \left( r^{d - \frac{2d}{d+1}} \right),
$$
which contradicts the optimality of such $a \ge r/c$. Therefore, we conclude
that 
$$
a(r) = \psi(r) r, \quad \text{where} \quad \psi(r) \rightarrow 0, \quad
\text{as} \quad r \rightarrow \infty.
$$ 
By the Step \ref{step3} of the proof:
$$
\#\{n \in \mathbb{Z}_{>0}^d \cap A(r)(r\Omega) \} = \frac{1}{2^d} |\Omega| r^d -
\frac{1}{2^{d}} \left( \tr A(r)^{-1} \right) r^{d-1} + \mathcal{O} \left(
a(r)^\frac{2d}{d+1} r^{d - \frac{2d}{d+1}} \right).
$$
Substituting $a(r) = \psi(r) r$ yields
$$
\# \left\{ n \in \mathbb{Z}_{>0}^d \cap A(r)(r\Omega) \right\} \le \frac{1}{2^d}
|\Omega| r^d - \left(\frac{1}{2^{d}} - C_1 \psi(r)^\frac{d-1}{d+1} \right)
\psi(r) r^d,
$$
for some constant $C_1$.  Since $\psi(r) \rightarrow 0$, we may choose $r$ large
enough such that 
$$
\frac{1}{2^{d}} - C_1 \psi(r)^\frac{d-1}{d+1} \ge \frac{1}{2^{d+1}}.
$$
Thus for large enough $r$,
$$
\# \left\{n \in \mathbb{Z}_{>0}^d \cap A(r) (r \Omega) \right\} \le
\frac{1}{2^{d}} |\Omega| r^d - \frac{1}{2^{d+1}}\psi(r) r^d.  
$$
However, if such a situation is optimal, it must be competitive with the
situation $A = \Id$, where
$$
\# \left\{ n \in \mathbb{Z}_{>0}^d \cap \Id (r\Omega) \right\} = \frac{1}{2^d}
|\Omega| r^d - \frac{d}{2^{d}} r^{d-1} + \mathcal{O} \left( r^{d -
\frac{2d}{d+1}} \right).
$$
Therefore, we conclude that
$$
\psi(r) r^d  = \mathcal{O} \left(r^{d-1} \right),
$$
which implies
$$
\psi(r) = \mathcal{O} \left(r^{-1} \right).
$$
Therefore, the set of optimal $a$ is uniformly bounded. Convergence then
immediately follows from the result from Step \ref{step3}, i.e., from the equation:
$$
\# \left\{ n \in \mathbb{Z}_{>0}^d \cap A(r \Omega) \right\} = 
\frac{1}{2^d} |\Omega| r^d - \frac{1}{2^d} \left( \tr A^{-1} \right) r^{d-1} + 
\mathcal{O} \left(a^{\frac{2d}{d+1}} r^{d - \frac{2d}{d+1}} \right).
$$
Indeed, since $a$ is uniformly bounded the second term determines the effect of
$A$ when $r$ is large. Therefore, in order to maximize $\# \{ n \in
\mathbb{Z}_{>0}^d \cap A(r \Omega) \}$ the coefficient of $r^{d-1}$ must be
minimized. More precisely, $A(r)$ can be characterized in terms of the following
minimization problem
$$
A(r)= \argmin_A \left( \tr A^{-1} \right) + \mathcal{O}\left(r^{-\frac{d -
1}{d+1}} \right)  
$$
where the $\argmin$ is taken over positive diagonal matrices $A$ of determinant
$1$ such that $1 \le \|A^{-1}\|_\infty \le c_0$, where $c_0 \ge 1$ is a fixed
constant. By the arithmetic mean geometric mean inequality, any positive
diagonal matrix $A$ of determinant $1$ satisfies
$$
\tr A^{-1} \ge d \left( \det A^{-1} \right)^{1/d} = d,
$$
with equality if and only if $A = \Id$.  Therefore, we conclude that
$$
\tr A(r)^{-1}  \rightarrow d, \quad \text{as } r \rightarrow
\infty.  
$$
Since $\tr A^{-1}$ is a continuous function, and equality holds in
the arithmetic mean geometric mean inequality if and only if $A = \Id$ we
conclude
$$
\|A(r) - \Id \|_\infty \rightarrow 0, \quad \text{as } r \rightarrow \infty,
$$
as was to be shown.
\end{proof}

In the fifth step, we establish a rate of convergence.

\begin{step} \label{step5}
Suppose that $\Omega$ is balanced in the sense of Definition \ref{balanced} and
is symmetric with respect to each coordinate hyperplane. Let
$$
A(r) = \argmax_A \, \#\{n \in \mathbb{Z}^d_{>0} \cap A(r\Omega) \},
$$
where the $\argmax$ ranges over all positive diagonal matrices $A$ of determinant $1$. Then
$$
\|A(r) - \Id\|_\infty = \mathcal{O}\left( r^{-\frac{d-1}{2(d+1)}} \right).
$$
\end{step}
\begin{proof}
Applying the result of Step \ref{step3} for $A = \Id$ gives
$$
\# \{ n \in \mathbb{Z}^d_{>0} \cap \Id( r \Omega) \} = \frac{1}{2^d} |\Omega|
r^d - \frac{1}{2^d} d r^{d-1} + \mathcal{O}(r^{d-\frac{2d}{d+1}}).
$$
By the arithmetic mean geometric mean inequality
$$
\tr A^{-1} \ge d \left( \det A^{-1} \right)^{1/d}  = d.
$$
Therefore, since $A(r)$ maximizes $\# \{ n \in \mathbb{Z}^d_{>0} \cap A (r
\Omega) \}$ over all $A$, we must have
$$
\frac{1}{2^d} \tr A^{-1}(r) r^{d-1} - \frac{1}{2^d} d r^{d-1}
\lesssim_\Omega r^{d - \frac{2d}{d+1}},
$$
which simplifies to
$$
\tr A^{-1}(r) -d \lesssim_\Omega r^{-\frac{d-1}{d+1}}.
$$
To complete the proof it suffices to show that:
$$
\frac{1}{8} \|A-\Id\|^2_\infty \le \tr A^{-1} -d.
$$
Indeed then,
$$
\frac{1}{8} \|A-\Id\|^2_\infty \le \tr A^{-1} -d  \lesssim_\Omega r^{-\frac{d-1}{d+1}}
\quad \implies \quad \|A-\Id\|_\infty \lesssim_\Omega r^{-\frac{d-1}{2(d+1)}}.
$$
Without loss of generality, suppose that 
$$
A(r) = \diag \left(a_1,a_2,\ldots,a_d \right) \quad \text{where} \quad a_1 \le
a_2 \le \ldots \le a_d.
$$
There are two cases to consider
\subsubsection*{Case 1} $\|A(r) - \Id\|_\infty = |a_d - 1|$. In this case,
suppose that $a_d = 1 + \varepsilon$ where $\varepsilon > 0$. Then
$$
\|A(r) - \Id \|_\infty = \varepsilon.
$$
Since the determinant of $A$ is equal $1$, it follows that the product $ a_1
\cdot a_2 \cdot \cdots \cdot a_{d-1} = \frac{1}{1+\varepsilon}$.  Therefore, by
the arithmetic mean geometric mean inequality,
$$
(d-1) ( 1 + \varepsilon )^\frac{1}{d-1} + \left( \frac{1}{1+\varepsilon} \right)
\le \frac{1}{a_1} + \frac{1}{a_2} + \cdots \frac{1}{a_{d-1}} + \left(
\frac{1}{1+\varepsilon} \right) = \tr A^{-1}.
$$
Expanding the left hand side in a Taylor series yields
$$
d + \frac{d \varepsilon^2}{2(d-1)} + \mathcal{O}(\varepsilon^3) = \tr A^{-1}.
$$
And therefore, when $\varepsilon$ is sufficiently small,
$$
\frac{1}{2} \|A(r) - \Id\|_\infty^2  =
\frac{1}{2} \varepsilon^2 \le \tr A^{-1} - d .
$$
\subsubsection*{Case 2} $\|A(r) - \Id\|_\infty = |a_1 - 1|$. Suppose that $a_1 = \frac{1}{1+\varepsilon}$. When $\varepsilon$ is
sufficiently small
$$
\|A(r) - \Id \|_\infty = |a_1 - 1 | \le 2 \varepsilon.
$$
Furthermore, since the determinant of $A$ is equal to $1$
$$
a_2 \cdot a_3 \cdot \cdots \cdot a_d = 1+\varepsilon.
$$ 
Therefore, by the arithmetic mean geometric mean inequality
$$
(1 + \varepsilon) + (d-1) \left(\frac{1}{1+\varepsilon} \right)^\frac{1}{d-1}
\le (1 + \varepsilon) + \frac{1}{a_2} + \frac{1}{a_3} + \cdots \frac{1}{a_d} =
\tr A^{-1}.
$$
Expanding the left hand side in a Taylor series yields
$$
d + \frac{d \varepsilon^2}{2(d-1)} + \mathcal{O}(\varepsilon^3) \le \tr A^{-1}.
$$
Therefore, when $\varepsilon$ is sufficiently small,
$$
\frac{1}{8} \|A - \Id|^2 \le \frac{1}{2} \varepsilon^2 \le \tr A^{-1} - d.
$$
Moreover, in either case
$$
\frac{1}{8} \|A-\Id\|^2 \le \tr A^{-1} -d,
$$
and the proof is complete.
\end{proof}

\textbf{Acknowledgment.} We are grateful to Stefan Steinerberger for many useful
discussions, and Andrei Deneanu for insightful comments. Additionally, we would
like to thank Richard Laugesen for valuable feedback leading to corrections in
the proof and a much improved exposition, and the referees for their helpful
comments.

\end{document}